\providecommand{\U}[1]{\protect\rule{.1in}{.1in}}
\newtheorem{theorem}{Theorem}[section]
\newtheorem{remark}[theorem]{Remark}
\theoremstyle{definition}
\theoremstyle{remark}
\numberwithin{equation}{section}
\let\pdfoutput=\undefined\fi
\begin{document}
\pagestyle{myheadings}

\begin{center}
{\huge \textbf{The complete equivalence canonical form of four matrices over an arbitrary  division ring}}\footnote{This research was supported by the grants from the National Natural
Science Foundation of China (11571220) and  the
Natural Sciences and Engineering Research Council of Canada (NSERC) (RGPIN
312386-2015).
\par
{}* Corresponding author. \par  Email address: wqw@t.shu.edu.cn, wqw369@yahoo.com (Q.W. Wang); hzh19871126@126.com (Z.H. He); yang.zhang@umanitoba.ca (Y. Zhang)}

\bigskip

{ \textbf{Zhuo-Heng He$^{a,b}$, Qing-Wen Wang$^{a,*}$, Yang Zhang$^{c}$}}

{\small
\vspace{0.25cm}

$a.$ Department of Mathematics, Shanghai University, Shanghai 200444, P. R. China\\

$b.$ Department of Mathematics and Statistics, Auburn University, AL 36849-5310, USA\\

$c.$ Department of Mathematics, University of Manitoba, Winnipeg, MB R3T 2N2, Canada}

\end{center}

\vspace{1cm}

\begin{quotation}
\noindent\textbf{Abstract:} In this paper, we give the complete structures of the equivalence canonical form of four matrices over an arbitrary division ring.
As applications,   we derive some practical necessary and sufficient conditions for the solvability to some systems of generalized Sylvester matrix equations using the ranks of their coefficient matrices. The results of this paper are new and available over the real number field, the complex number field, and the quaternion algebra.\vspace{3mm}

\noindent\textbf{Keywords:} Matrix decomposition; Division ring; generalized Sylvester matrix equations; Rank; Hermite matrix\newline%
\noindent\textbf{2010 AMS Subject Classifications:\ }{\small 15A21, 15A24, 15B33, 15A03, 15B57 }\newline
\end{quotation}

\section{\textbf{Introduction}}
The singular value decomposition (SVD) is not only a basic approach  in matrix theory, but also a powerful technique for solving problems in such diverse
applications as signal processing, statistics, system and control theory and psychometrics (e.g., \cite{moor6}, \cite{Deprettere}, \cite{alter01}, \cite{alter03}, \cite{alter02}, \cite{alter04}). To solve various problems, people have already extended the SVD method to a set of matrices instead of a single matrix (e.g., \cite{dlch}-\cite{moorbelgium}, \cite{FERNANDO}-\cite{heath}, \cite{7777}, \cite{helaa}, \cite{hewangamc2017}, \cite{ccp}, \cite{QWWangandyushaowen}, \cite{zhangxia}, \cite{van}, \cite{tree1}).

Matrix decomposition over an arbitrary division ring is an important part of matrix theory and division ring theory. Since
the commutative law of the multiplication does not hold over an arbitrary division ring, the results of matrix decompositions over an arbitrary division ring have
not been more fruitful so far than those over fields. Wang, van der Woude, and Yu  \cite{QWWangandyushaowen} proposed an equivalence canonical form of a matrix triplet
\begin{align}\label{equ011}
\bordermatrix{
~& p  \cr
m&A\cr
s&C\cr
t&D }
\end{align}
over an arbitrary division ring with the same number of columns.

We in this paper investigate the equivalence canonical form of four matrices
\begin{align}\label{array1}
\bordermatrix{
~& p & q \cr
m&A&B\cr
s&C&~\cr
t&D&~ },
\end{align}
where $A (m\times p),
B (m\times q),C (s\times p)$ and $D (t\times p)$ are  arbitrary  general matrices over an arbitrary  division ring $\mathcal{F}$. In the matrix array (\ref{array1}) with
four matrices, the matrices $A$ and $B$ have the same number of rows, meanwhile $A,C$ and $D$ have the same number of columns. Note that the matrix triplet
(\ref{equ011}) is a special case of (\ref{array1}).

In 2012, Wang, Zhang and van der Woude \cite{zhangxia} gave an equivalence canonical form of the general matrix   quaternity (\ref{array1}) over $\mathcal{F}$. They proved how to find nonsingular matrices $M,P,Q,$ and $S$ such that
\begin{align}
MAP=S_{a},\quad MBQ=S_{b},\quad SCP=S_{c},\quad TDP=S_{d}.
\end{align}
The matrices $S_{a},S_{b},S_{c},S_{d}$ are quasi-diagonal matrices, their only nonzero blocks being identity matrices (see \cite{zhangxia}).

\begin{theorem}\label{lemma01}\cite{zhangxia} Consider the equivalence canonical form of the general matrix   quaternity (\ref{array1}), where $A (m\times p),
B (m\times q),C (s\times p)$ and $D (t\times p)$ are  arbitrary  general matrices over an arbitrary  division ring $\mathcal{F}$. Then, there exist nonsingular matrices
 $M,P,Q,S$ and $T$ such that
\begin{align}
MAP=S_{a},\quad MBQ=S_{b},\quad SCP=S_{c},\quad TDP=S_{d},
\end{align}
where
\begin{align}\label{equ022}
S_{a}=\bordermatrix{
~&r_{1}&r_{2}& \cr
~&0&I&0\cr
~&0&0&0\cr
~&I&0&0\cr
~&0&0&0
},
S_{b}=
\bordermatrix{
~&\mathbf{rank}(B)& \cr
~&I&0\cr
~&0&0
},
\end{align}
\begin{align}
S_{c}=\bordermatrix{
~&r_{5}&r_{1}-r_{5}&r_{4}&r_{2}-r_{4}&r_{3}& \cr
~&I&0&0&0&0&0\cr
~&0&0&I&0&0&0\cr
~&0&0&0&0&I&0\cr
~&0&0&0&0&0&0
},
\end{align}
\begin{align}\label{equ024}
S_{d}=(\Delta_{4}^{1},\Delta_{4}^{2}),
\end{align}
\begin{align}
\Delta_{4}^{1}=
\bordermatrix{
~&r_{11}&r_{5}-r_{11}&r_{10}&r_{13}&r_{14}&r_{\theta}&r_{13}&r_{9}-r_{13}&r_{4}-r_{9}\cr
~&I&0&0&0&0&0&0&0&0\cr
~&0&0&I&0&0&0&0&0&0\cr
~&0&0&0&I&0&0&I&0&0\cr
~&0&0&0&0&0&0&0&I&0\cr
~&0&0&0&0&0&0&0&0&0\cr
~&0&0&0&0&I&0&0&0&0\cr
~&0&0&0&0&0&0&0&0&0\cr
~&0&0&0&0&0&0&0&0&0\cr
~&0&0&0&0&0&0&0&0&0\cr
~&0&0&0&0&0&0&0&0&0
}
\end{align}
\begin{align}
\Delta_{4}^{2}=
\bordermatrix{
~&r_{8}&r_{12}&r_{\pi}&r_{12}&r_{14}&r_{7}-r_{12}-r_{14}&r_{3}-r_{7}&r_{6}&\cr
~&0&0&0&0&0&0&0&0&0\cr
~&0&0&0&0&0&0&0&0&0\cr
~&0&0&0&0&0&0&0&0&0\cr
~&0&0&0&0&0&0&0&0&0\cr
~&0&I&0&I&0&0&0&0&0\cr
~&0&0&0&0&I&0&0&0&0\cr
~&0&0&0&0&0&I&0&0&0\cr
~&I&0&0&0&0&0&0&0&0\cr
~&0&0&0&0&0&0&0&I&0\cr
~&0&0&0&0&0&0&0&0&0
},
\end{align}
\begin{align}\label{xin110}
r_{\theta}=r_{1}-r_{5}-r_{10}-r_{13}-r_{14},~r_{\pi}=r_{2}-r_{4}-r_{8}-r_{12},
\end{align}
\begin{align}
r_{1}=\mathbf{rank}(A,~B)-\mathbf{rank}(B),~r_{2}=\mathbf{rank}(A)+\mathbf{rank}(B)-\mathbf{rank}(A,~B),
\end{align}
\begin{align}
r_{3}=\mathbf{rank}\begin{pmatrix}A\\C\end{pmatrix}-\mathbf{rank}(A),~r_{4}=\mathbf{rank}\begin{pmatrix}A&B\\C&0\end{pmatrix}+\mathbf{rank}(A)-\mathbf{rank}(A,~B)
-\mathbf{rank}\begin{pmatrix}A\\C\end{pmatrix},
\end{align}
\begin{align}
r_{5}=\mathbf{rank}(C)+\mathbf{rank}(A,~B)-\mathbf{rank}\begin{pmatrix}A&B\\C&0\end{pmatrix},
\end{align}
\begin{align}
r_{6}=\mathbf{rank}\begin{pmatrix}A\\C\\D\end{pmatrix}-\mathbf{rank}\begin{pmatrix}A\\C\end{pmatrix},
~r_{7}=\mathbf{rank}\begin{pmatrix}A\\D\end{pmatrix}+\mathbf{rank}\begin{pmatrix}A\\C\end{pmatrix}-\mathbf{rank}(A)-\mathbf{rank}\begin{pmatrix}A\\C\\D\end{pmatrix},
\end{align}
\begin{align}\label{equhh028}
r_{8}+r_{9}=\mathbf{rank}\begin{pmatrix}A&B\\D&0\end{pmatrix}+\mathbf{rank}(A)-\mathbf{rank}\begin{pmatrix}A\\C\end{pmatrix}-\mathbf{rank}\begin{pmatrix}A\\D\end{pmatrix},
\end{align}
\begin{align}
r_{8}+r_{12}=\mathbf{rank}\begin{pmatrix}A&B\\C&0\\D&0\end{pmatrix}+\mathbf{rank}\begin{pmatrix}A\\C\end{pmatrix}
-\mathbf{rank}\begin{pmatrix}A&B\\C&0\end{pmatrix}-\mathbf{rank}\begin{pmatrix}A\\C\\D\end{pmatrix},
\end{align}
\begin{align}
r_{10}+r_{11}=\mathbf{rank}(A,~B)+\mathbf{rank}(D)-\mathbf{rank}\begin{pmatrix}A&B\\D&0\end{pmatrix},
\end{align}
\begin{align}\label{equhh029}
r_{10}+r_{13}+r_{14}=\mathbf{rank}\begin{pmatrix}C\\D\end{pmatrix}+\mathbf{rank}\begin{pmatrix}A&B\\C&0\end{pmatrix}-\mathbf{rank}(C)
-\mathbf{rank}\begin{pmatrix}A&B\\C&0\\D&0\end{pmatrix}.
\end{align}

\end{theorem}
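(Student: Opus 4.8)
The plan is to build the four target forms by a finite chain of admissible elementary block operations, normalising the matrices in the order $A,B,C,D$, and then to extract the rank identities at the end by a bookkeeping computation. The decisive point is to keep track of the ``reach'' of each transformation: $M$ left-multiplies $A$ and $B$ (acting on the rows of both), $P$ right-multiplies $A$, $C$ and $D$ (acting on the columns of all three), while $Q$ acts only on the columns of $B$, $S$ only on the rows of $C$, and $T$ only on the rows of $D$. After a matrix is frozen in its canonical shape, one records the subgroup of the remaining transformations that fixes everything normalised so far, and only that subgroup is used in the subsequent steps.

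First I would use $M$ together with the matching part of $P$ to reduce $A$ to $\bigl(\begin{smallmatrix}I&0\\0&0\end{smallmatrix}\bigr)$. Writing $MB=\bigl(\begin{smallmatrix}B_{1}\\B_{2}\end{smallmatrix}\bigr)$ in the induced row partition, the subgroup of $M$ fixing this $A$ is block upper triangular and, through the identity tying its $(1,1)$ block to the $(1,1)$ block of $P$, is coupled to the column operations on $A$; using it together with $Q$ one carries out the classical reduction of a matrix pair with common row space — clear $B_{2}$ to $\bigl(\begin{smallmatrix}I&0\\0&0\end{smallmatrix}\bigr)$-shape, use the off-diagonal block of $M$ to clean the matching columns of $B_{1}$, and reduce what survives of $B_{1}$. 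After a final permutation $A$ and $B$ take the forms $S_{a},S_{b}$, which introduces $r_{1}=\mathbf{rank}(A,\,B)-\mathbf{rank}(B)$ and $r_{2}=\mathbf{rank}(A)+\mathbf{rank}(B)-\mathbf{rank}(A,\,B)$; the identities $r_{1}+r_{2}=\mathbf{rank}(A)$ and $\mathbf{rank}(B)=r_{2}+\bigl(\mathbf{rank}(A,\,B)-\mathbf{rank}(A)\bigr)$ make all the block sizes so far nonnegative.

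Next I would normalise $C$, keeping $M$ fixed so that $S_{a},S_{b}$ are not disturbed. Since the $q$ columns of $B$ are disjoint from the $p$ columns shared by $A,C,D$, the column operations $P$ never touch $B$; the ones preserving $S_{a}$ form a block triangular group respecting the column partition of $A$ into the $r_{1}$-block, the $r_{2}$-block and the complementary block. Since $S$ still furnishes arbitrary row operations on $C$, the normalisation of $C$ under these two ingredients is a block-echelon reduction whose complete invariants are the ranks of $C$ over the various unions of those column blocks; they are recorded by $r_{3}=\mathbf{rank}\bigl(\begin{smallmatrix}A\\C\end{smallmatrix}\bigr)-\mathbf{rank}(A)$, $r_{5}=\mathbf{rank}(C)+\mathbf{rank}(A,\,B)-\mathbf{rank}\bigl(\begin{smallmatrix}A&B\\C&0\end{smallmatrix}\bigr)$ and $r_{4}=\mathbf{rank}\bigl(\begin{smallmatrix}A&B\\C&0\end{smallmatrix}\bigr)+\mathbf{rank}(A)-\mathbf{rank}(A,\,B)-\mathbf{rank}\bigl(\begin{smallmatrix}A\\C\end{smallmatrix}\bigr)$, and they bring $C$ to $S_{c}$. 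These formulas are forced because enlarging the transformations to $\mathrm{diag}(M,S)$ on the left and $\mathrm{diag}(P,Q)$ on the right keeps them invertible, so every compound rank such as $\mathbf{rank}\bigl(\begin{smallmatrix}A&B\\C&0\end{smallmatrix}\bigr)$ is unchanged and may be evaluated on $\bigl(\begin{smallmatrix}S_{a}&S_{b}\\S_{c}&0\end{smallmatrix}\bigr)$; matching these counts against the explicit block structure of $S_{a},S_{b},S_{c}$ pins down $r_{1},\dots,r_{5}$.

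The last and hardest step is to normalise $D$. The admissible moves on the columns of $D$ are now the column operations on $A$ that preserve $S_{a}$ and that preserve $S_{c}$ up to a compensating row operation on $C$ — a block triangular group refining the earlier column partition of $A$ according to how each block meets the row/column pattern of $S_{c}$ — together with the free row operations $T$ on $D$; consequently $D$ splits into many sub-blocks that are reduced almost independently. Together with the already-frozen $S_{a},S_{b},S_{c}$, each sub-configuration is exactly of the type governed by the triplet canonical form of Wang, van der Woude and Yu \cite{QWWangandyushaowen}, iterated over the blocks; carrying this out yields $S_{d}=(\Delta_{4}^{1},\Delta_{4}^{2})$ together with the remaining parameters $r_{6},\dots,r_{14}$, the auxiliary quantities $r_{\theta},r_{\pi}$ of (\ref{xin110}) being forced by row and column count. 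Finally, exactly as in the $C$-step, $\mathrm{diag}(M,S,T)$ and $\mathrm{diag}(P,Q)$ remain invertible, so each compound matrix occurring in (\ref{equhh028})--(\ref{equhh029}) — for instance $\bigl(\begin{smallmatrix}A\\D\end{smallmatrix}\bigr)$, $\bigl(\begin{smallmatrix}A\\C\\D\end{smallmatrix}\bigr)$, $\bigl(\begin{smallmatrix}A&B\\D&0\end{smallmatrix}\bigr)$, $\bigl(\begin{smallmatrix}A&B\\C&0\\D&0\end{smallmatrix}\bigr)$ and $\bigl(\begin{smallmatrix}C\\D\end{smallmatrix}\bigr)$ — has the same rank as the block matrix assembled from $S_{a},S_{b},S_{c},S_{d}$, whose rank is read straight off the explicit form; solving the resulting linear system for $r_{6},\dots,r_{14}$ reproduces (\ref{equhh028})--(\ref{equhh029}) and simultaneously verifies that every block size is nonnegative. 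I expect the main obstacle to be exactly this last step: controlling the admissible column-operation group once $C$ is frozen, and organising the combinatorics of the fourteen interdependent parameters (including the check that rows and columns still sum to $m,s,t$ and $p,q$). Everything else is either classical or a direct appeal to the triplet form, so the whole difficulty of the theorem is concentrated in the case analysis for $D$.
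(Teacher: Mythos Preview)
The paper does not actually prove this theorem: Theorem~\ref{lemma01} is quoted verbatim from \cite{zhangxia} and carries no proof here. The present paper's contribution begins at Theorem~2.1, where the authors determine $r_{8},\dots,r_{14}$ explicitly by constructing three auxiliary block matrices and computing their ranks. So there is no ``paper's own proof'' to compare your proposal against.

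That said, your outline is the standard and correct strategy for results of this type, and it is essentially the approach of \cite{zhangxia} itself: normalise $A$, then $B$ (sharing row operations with $A$), then $C$ (sharing column operations with $A$), and finally $D$, at each stage restricting to the stabiliser subgroup of what has already been frozen. Your appeal to the triplet canonical form of \cite{QWWangandyushaowen} for the $D$-step is exactly what \cite{zhangxia} does, since that paper is the direct sequel. Your bookkeeping plan --- evaluating compound ranks on the canonical forms via $\mathrm{diag}(M,S,T)$ and $\mathrm{diag}(P,Q)$ --- is also the right way to extract the rank identities. One caveat: you should not expect this step to ``solve the resulting linear system for $r_{6},\dots,r_{14}$'' completely; the whole point of the present paper is that the equations (\ref{equhh028})--(\ref{equhh029}) obtained from the obvious compound matrices are \emph{underdetermined} (seven unknowns $r_{8},\dots,r_{14}$ but only four independent relations), and one needs the more elaborate block matrices of Theorem~2.1 to pin them down individually. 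Your plan reproduces exactly the partial relations stated in Theorem~\ref{lemma01}, which is all that is claimed there.
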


The theorem mentioned above  is important to matrix theory. However, so far, the explicit expressions of  $r_{8},r_{9},r_{10},r_{11},r_{12},r_{13},$ and $r_{14}$ have not been given except for giving the relations among $r_{8},r_{9},r_{10},r_{11},r_{12},r_{13},$ and $r_{14}$ in \cite{zhangxia}.  Clearly, we only find the explicit expressions of  $r_{8},r_{9},r_{10},r_{11},r_{12},r_{13},$ and $r_{14}$, the equivalence canonical form of the general matrix quaternity (\ref{array1}) is complete. The main aim of this paper is to give the complete equivalence canonical form of the
matrix quaternity (\ref{array1}) in terms of the ranks of the matrices $A,B,C,$ and $D.$  In order to solve this challenging problem, we need
to construct some more complicated block matrices and investigate their relationships. Another motivation of this paper is that by the complete equivalence canonical form of the general matrix quaternity (\ref{array1}), we investigate some practical solvability conditions for the following two systems of generalized Sylvester-type matrix equations

\begin{align}
\label{system02}
\left\{
\begin{array}{rll}
AXE+BYH & = & \Phi,\\ CXF & = & \Psi,\\ DXG & = & \Omega
\end{array}
  \right.
\end{align}
and
\begin{align}
\label{system05}
\left\{
\begin{array}{rll}
AXE+BY+ZH & = & \Phi,\\
CXF & = & \Psi,\\
DXG & = & \Omega,
\end{array}
  \right.
\end{align}
where  $A,B,C,D,E,F,G,H,\Phi,\Psi,$ and $\Omega$ are given matrices over an arbitrary division ring. Note that the system (\ref{system02}) and (\ref{system05}) contain the classical matrix equations
\begin{align}
\label{system10}
\left\{
\begin{array}{rll}
AXE & = & \Phi \\
CXF & = & \Psi \\
DXG & = & \Omega
\end{array}
  \right.
\end{align}
as a special case. Some non-interacting control problems can be transformed into the system (\ref{system10}) (see, \cite{JW01}) which was further investigated recently in \cite{HWang}. Matrix equations have been one of the main topics in matrix theory and its
applications. There have been many papers using different approaches to investigate the matrix equations
(e.g., \cite{xibanya01}-\cite{Dmytryshyn}, \cite{duan1}, \cite{wanghe2}-\cite{mao16}, \cite{A.B}, \cite{wanghe4444444}-\cite{JW01}, \cite{dxiuxie2}-\cite{yuan2}, \cite{wangronghao}).

The remainder of the paper is organized as follows. In Section 2, we give the explicit expressions of  $r_{8},\ldots,r_{14}$ in Theorem \ref{lemma01}.  In Section 3, as applications of the main results in Section 2, we derive some necessary and sufficient conditions for the solvability to the systems (\ref{system02}) and (\ref{system05}) by using the ranks of the given matrices. The conditions presented in this paper are simpler
and practical comparing with the solvability conditions given in \cite{zhangxia}. Here we only need to check whether these rank conditions hold or not.
Moreover, some solvable conditions are given for Hermitian solutions to some systems of matrix equations over a division ring with an involution.

Throughout this paper, we denote the set of all $m\times n$ matrices over $\mathcal{F}$ by $\mathcal{F}^{m\times
n}$, the set of all $n\times n$ invertible matrices over $\mathcal{F}$ by $GL_{n}(\mathcal{F})$.   Let $ A \in \mathcal{F}^{m\times
n}$, the symbols $I,\mathcal{C}(A),\mathcal{R}(A)$ and $\mbox{dim} \mathcal{R}(A)$ stand for the
 identity matrix, the column right space of $A$, the row left space of   $A$   and the dimension of $\mathcal{R}(A),$ respectively.  It is well-known that $\mbox{dim} \mathcal{R}(A)=\mbox{dim} \mathcal{C}(A),$ which is called the rank of $A$ and denoted by $\mathbf{r}(A).$ The symbol $A^{*}$ stands for the conjugate transpose of $A$ over an arbitrary division ring with an involutive anti-automorphism ``$*$''.

\section{\textbf{The explicit expressions of  $r_{8},\ldots,r_{14}$ in Theorem \ref{lemma01}}}

In this Section, we consider the explicit expressions of  $r_{8},\ldots,r_{14}$ in Theorem \ref{lemma01}, which is a challenging problem in \cite{zhangxia}.

\begin{theorem}\label{theorem01}
Consider the equivalence canonical form of the general matrix   quaternity (\ref{array1}), where $A\in \mathcal{F}^{m\times p},
B\in \mathcal{F}^{m\times q},C\in \mathcal{F}^{s\times p}$ and $D\in \mathcal{F}^{t\times p}$ are  arbitrary  general matrices over an arbitrary  division ring $\mathcal{F}$. Then the explicit expressions of $r_{8},\ldots,r_{14}$ that appear in Theorem \ref{lemma01} are given by:
\begin{align*}
r_{8}=-\mathbf{r}\begin{pmatrix}A\\D\end{pmatrix}-\mathbf{r}\begin{pmatrix}A&B\\C&0\end{pmatrix}
+\mathbf{r}\begin{pmatrix}0&A&B\\A&A&0\\C&0&0\\0&D&0\end{pmatrix},
\end{align*}
\begin{align*}
r_{9}=\mathbf{r}\begin{pmatrix}A&B\\C&0\end{pmatrix}+\mathbf{r}\begin{pmatrix}A&B\\D&0\end{pmatrix}-\mathbf{r}\begin{pmatrix}0&A&B\\A&A&0\\C&0&0\\0&D&0\end{pmatrix}
+\mathbf{r}(A)-\mathbf{r}(A,~B),
\end{align*}
\begin{align*}
r_{10}=-\mathbf{r}(C)-\mathbf{r}\begin{pmatrix}A&B\\D&0\end{pmatrix}+\mathbf{r}\begin{pmatrix}A&A&B\\0&D&0\end{pmatrix},
\end{align*}
\begin{align*}
r_{11}=\mathbf{r}(C)+\mathbf{r}(D)+\mathbf{r}(A,~B)-\mathbf{r}\begin{pmatrix}A&A&B\\C&0&0\\0&D&0\end{pmatrix},
\end{align*}
\begin{align*}
r_{12}=\mathbf{r}\begin{pmatrix}A\\C\end{pmatrix}+\mathbf{r}\begin{pmatrix}A\\D\end{pmatrix}-\mathbf{r}\begin{pmatrix}A\\C\\D\end{pmatrix}
+\mathbf{r}\begin{pmatrix}A&B\\C&0\\D&0\end{pmatrix}-\mathbf{r}\begin{pmatrix}0&A&B\\A&A&0\\0&D&0\end{pmatrix},
\end{align*}
\begin{align*}
r_{13}=\mathbf{r}\begin{pmatrix}A&B\\C&0\end{pmatrix}+\mathbf{r}\begin{pmatrix}A&B\\D&0\end{pmatrix}
+\mathbf{r}\begin{pmatrix}A&A\\C&0\\0&D\end{pmatrix}-\mathbf{r}\begin{pmatrix}A&A&B\\C&0&0\\0&D&0\end{pmatrix}
-\mathbf{r}\begin{pmatrix}0&A&B\\A&A&0\\C&0&0\\0&D&0\end{pmatrix},
\end{align*}
\begin{align*}
r_{14}=\mathbf{r}\begin{pmatrix}C\\D\end{pmatrix}-\mathbf{r}\begin{pmatrix}A&B\\C&0\\D&0\end{pmatrix}
-\mathbf{r}\begin{pmatrix}A&A\\C&0\\0&D\end{pmatrix}+\mathbf{r}\begin{pmatrix}0&A&B\\A&A&0\\C&0&0\\0&D&0\end{pmatrix}.
\end{align*}
\end{theorem}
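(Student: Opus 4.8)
The plan is to use the canonical form of Theorem \ref{lemma01} to turn each block matrix occurring in the statement into a matrix assembled only from the quasi-diagonal identity blocks $S_a,S_b,S_c,S_d$, to compute the rank of each such matrix as an explicit linear combination of $r_1,\dots,r_{14}$ (and of the auxiliary quantities $r_\theta,r_\pi$), and then to solve the resulting linear relations for $r_8,\dots,r_{14}$.

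First I would fix nonsingular $M,P,Q,S,T$ as provided by Theorem \ref{lemma01}, so that $MAP=S_a$, $MBQ=S_b$, $SCP=S_c$ and $TDP=S_d$. For a block matrix built from $A,B,C,D$ and zero blocks, multiplying on the left and on the right by suitable block-diagonal nonsingular matrices formed from $M,P,Q,S,T$ replaces every occurrence of $A,B,C,D$ by the corresponding $S_a,S_b,S_c,S_d$, without changing the rank. For example, left multiplication by $\mathrm{diag}(M,M,S,T)$ and right multiplication by $\mathrm{diag}(P,P,Q)$ gives
\[
\mathbf{r}\begin{pmatrix}0&A&B\\A&A&0\\C&0&0\\0&D&0\end{pmatrix}
=\mathbf{r}\begin{pmatrix}0&S_a&S_b\\S_a&S_a&0\\S_c&0&0\\0&S_d&0\end{pmatrix},
\]
and the same device applies to the other genuinely new block matrices of the statement, namely $\begin{pmatrix}A&A&B\\0&D&0\end{pmatrix}$, $\begin{pmatrix}A&A&B\\C&0&0\\0&D&0\end{pmatrix}$, $\begin{pmatrix}0&A&B\\A&A&0\\0&D&0\end{pmatrix}$ and $\begin{pmatrix}A&A\\C&0\\0&D\end{pmatrix}$. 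The remaining ``classical'' blocks $\begin{pmatrix}A\\C\end{pmatrix}$, $\begin{pmatrix}A\\D\end{pmatrix}$, $\begin{pmatrix}A\\C\\D\end{pmatrix}$, $\begin{pmatrix}A&B\\C&0\end{pmatrix}$, $\begin{pmatrix}A&B\\D&0\end{pmatrix}$, $\begin{pmatrix}A&B\\C&0\\D&0\end{pmatrix}$, $\begin{pmatrix}C\\D\end{pmatrix}$ have ranks already recorded, through the definitions of $r_1,\dots,r_7$, in Theorem \ref{lemma01}.

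Next, each assembled matrix consists only of zero and identity blocks whose sizes are prescribed by $r_1,\dots,r_{14},r_\theta,r_\pi$, so its rank can be read off after a finite sequence of block row and column operations eliminating those identity blocks; the result is a linear form in $r_1,\dots,r_{14},r_\theta,r_\pi$, which collapses to a linear form in $r_1,\dots,r_{14}$ upon substituting (\ref{xin110}). Carrying this out for the five new block matrices yields five rank identities; adjoining them to the four relations (\ref{equhh028})--(\ref{equhh029}) of Theorem \ref{lemma01} produces a linear system that determines $r_8,\dots,r_{14}$. Solving this system, and then re-expressing the block ranks by means of the formulas for $r_1,\dots,r_7$ so that only ranks of $A,B,C,D$ and of the displayed block matrices survive, gives exactly the seven formulas asserted.

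The main obstacle will be the explicit block reduction in the preceding step. The block $S_d=(\Delta_4^1,\Delta_4^2)$ is a large array with fourteen independent size parameters, and in the assembled matrices the identity blocks of $S_a,S_b,S_c,S_d$ occupy overlapping row and column ranges; the eliminating operations must therefore be organized across all four blocks simultaneously and carried out in an order that avoids over- or under-counting cancellations. The nonnegativity of the parameters --- in particular $r_\theta\ge 0$ and $r_\pi\ge 0$, and the fact that each of the combinations on the right-hand sides of (\ref{equhh028})--(\ref{equhh029}) is itself a sum of nonnegative $r_i$'s --- is what makes these operations admissible and the resulting system uniquely solvable for $r_8,\dots,r_{14}$. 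Once the five rank identities are established, the remaining linear algebra is routine.
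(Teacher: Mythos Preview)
Your proposal is correct and follows essentially the same approach as the paper: reduce the new block matrices to assemblies of $S_a,S_b,S_c,S_d$ via the canonical form, read off their ranks as linear combinations of $r_1,\dots,r_{14}$, and solve together with the relations (\ref{equhh028})--(\ref{equhh029}). The only difference is efficiency: the paper computes the ranks of just three auxiliary block matrices---$\begin{pmatrix}A&A\\C&0\\0&D\end{pmatrix}$, $\begin{pmatrix}A&A&B\\C&0&0\\0&D&0\end{pmatrix}$, and $\begin{pmatrix}0&A&B\\A&A&0\\C&0&0\\0&D&0\end{pmatrix}$---which, combined with the four relations from Theorem~\ref{lemma01}, already suffice to determine the seven unknowns, whereas you propose computing all five new block matrices appearing in the statement.
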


\begin{proof}Note that we can not derive the explicit expressions of  $r_{8},\ldots,r_{14}$ from the equalities  in (\ref{equhh028})-(\ref{equhh029}). In order to solve this problem, we need to construct some more complicated block matrices and investigate some relations.
We construct the following three block matrices:
\begin{align}
\begin{pmatrix}A&A\\C&0\\0&D\end{pmatrix},\quad \begin{pmatrix}A&A&B\\C&0&0\\0&D&0\end{pmatrix},
\quad \begin{pmatrix}0&A&B\\A&A&0\\C&0&0\\0&D&0\end{pmatrix}.
\end{align}
We now turn our attention to the ranks of the above three block matrices. We want to give the ranks of these matrices in terms of $r_{1},\ldots,r_{14},r_{\pi},r_{\theta}$. Upon computations, we obtain
\begin{align}\label{xin022}
\mathbf{r}\begin{pmatrix}A&A\\C&0\\0&D\end{pmatrix}=r_{3}+2r_{4}+2r_{5}+r_{6}+r_{7}+2r_{8}+2r_{10}+r_{12}+2r_{13}+r_{14}+r_{\pi}+r_{\theta},
\end{align}
\begin{align}
&\mathbf{r}\begin{pmatrix}A&A&B\\C&0&0\\0&D&0\end{pmatrix}=\nonumber\\
&\mathbf{r}(B)-r_{2}+r_{3}+2r_{4}+2r_{5}+r_{6}+r_{7}+2r_{8}+r_{9}+2r_{10}+r_{12}+r_{13}+r_{14}+r_{\pi}+r_{\theta},
\end{align}
\begin{align}\label{xin024}
&\mathbf{r}\begin{pmatrix}0&A&B\\A&A&0\\C&0&0\\0&D&0\end{pmatrix}=\nonumber\\
&\mathbf{r}(B)-r_{2}+r_{3}+3r_{4}+2r_{5}+r_{6}+r_{7}+3r_{8}+2r_{10}+2r_{12}+2r_{13}+2r_{14}+2r_{\pi}+2r_{\theta}.
\end{align}
We can derive the explicit expressions of $r_{8},\ldots,r_{14}$ via equations (\ref{xin110})-(\ref{equhh029}) and (\ref{xin022})-(\ref{xin024}).

\end{proof}

Similarly, we can obtain the corresponding results about the matrix array:
\begin{align}
\bordermatrix{
~& m_{1} & s_{1}&t_{1} \cr
p_{1}&E&F&G\cr
q_{1}&H& & },
\end{align}
where $E\in \mathcal{F}^{p_{1}\times m_{1}},
F\in \mathcal{F}^{p_{1}\times s_{1}},G\in \mathcal{F}^{p_{1}\times t_{1}}$, and $H\in \mathcal{F}^{q_{1}\times m_{1}}$.

\begin{theorem}\label{theorem02}
  Given $E\in \mathcal{F}^{p_{1}\times m_{1}},
F\in \mathcal{F}^{p_{1}\times s_{1}},G\in \mathcal{F}^{p_{1}\times t_{1}}$, and $H\in \mathcal{F}^{q_{1}\times m_{1}}$.  Then, there exist
 $M_{1}\in GL_{m_{1}}(\mathcal{F}),~P_{1}\in GL_{p_{1}}(\mathcal{F}),~
Q_{1}\in GL_{q_{1}}(\mathcal{F}),~S_{1}\in GL_{s_{1}}(\mathcal{F})$, and $T_{1}\in GL_{t_{1}}(\mathcal{F})$ such that
\begin{align}\label{equ221}
P_{1}EM_{1}=S_{e},\quad P_{1}FS_{1}=S_{f},\quad P_{1}GT_{1}=S_{g},\quad Q_{1}HM_{1}=S_{h},
\end{align}
where
\begin{align}\label{equ222}
S_{e}=
\bordermatrix{
~& v_{1} &v_{3} &v_{2} \cr
v_{2}&0&0&I&0\cr
v_{1}&I&0&0&0\cr
&0&0&0&0 },~
S_{f}=
\bordermatrix{
~& v_{4}&v_{6}&v_{8}\cr
v_{4}&I&0&0&0\cr
v_{5}&0&0&0&0\cr
v_{6}&0&I&0&0\cr
v_{7}&0&0&0&0\cr
v_{8}&0&0&I&0\cr
&0&0&0&0 },~
S_{h}=\bordermatrix{
~& \mathbf{r}(H)\cr
\mathbf{r}(H)&I&0\cr
&0&0},
\end{align}
\begin{align*}
S_{g}=
\begin{matrix}v_{9}\\v_{4}-v_{9}\\v_{10}\\v_{11}\\v_{12}\\v_{5}-v_{10}-v_{11}-v_{12}\\v_{11}\\v_{13}\\v_{6}-v_{11}-v_{13}\\
v_{14}\\v_{15}\\v_{7}-v_{14}-v_{15}\\v_{15}\\v_{12}\\v_{16}\\v_{8}-v_{15}-v_{12}-v_{16}\\v_{17}\\~\end{matrix}\begin{pmatrix}
I&0&0&0&0&0&0&0&0&0\\
0&0&0&0&0&0&0&0&0&0\\
0&I&0&0&0&0&0&0&0&0\\
0&0&I&0&0&0&0&0&0&0\\
0&0&0&0&0&I&0&0&0&0\\
0&0&0&0&0&0&0&0&0&0\\
0&0&I&0&0&0&0&0&0&0\\
0&0&0&I&0&0&0&0&0&0\\
0&0&0&0&0&0&0&0&0&0\\
0&0&0&0&0&0&0&I&0&0\\
0&0&0&0&I&0&0&0&0&0\\
0&0&0&0&0&0&0&0&0&0\\
0&0&0&0&I&0&0&0&0&0\\
0&0&0&0&0&I&0&0&0&0\\
0&0&0&0&0&0&I&0&0&0\\
0&0&0&0&0&0&0&0&0&0\\
0&0&0&0&0&0&0&0&I&0\\
0&0&0&0&0&0&0&0&0&0
\end{pmatrix},
\end{align*}
\begin{align*}
v_{1}=\mathbf{r}(E)+\mathbf{r}(H)-\mathbf{r}\begin{pmatrix}E\\H\end{pmatrix},
v_{2}=\mathbf{r}\begin{pmatrix}E\\H\end{pmatrix}-\mathbf{r}(H),
v_{3}=-\mathbf{r}(E)+\mathbf{r}\begin{pmatrix}E\\H\end{pmatrix},
\end{align*}
\begin{align*}
v_{4}=\mathbf{r}(F)+\mathbf{r}\begin{pmatrix}E\\H\end{pmatrix}-\mathbf{r}\begin{pmatrix}E&F\\H&0\end{pmatrix},
v_{5}=\mathbf{r}\begin{pmatrix}E&F\\H&0\end{pmatrix}-\mathbf{r}(F)-\mathbf{r}(H),
\end{align*}
\begin{align*}
v_{6}=\mathbf{r}\begin{pmatrix}E&F\\H&0\end{pmatrix}+\mathbf{r}(E)-\mathbf{r}\begin{pmatrix}E\\H\end{pmatrix}-\mathbf{r}(E,~F),
v_{7}=\mathbf{r}(E,~F)+\mathbf{r}(H)-\mathbf{r}\begin{pmatrix}E&F\\H&0\end{pmatrix},
\end{align*}
\begin{align*}
v_{8}=\mathbf{r}(E,~F)-\mathbf{r}(E),~v_{9}=\mathbf{r}(F)+\mathbf{r}(G)+\mathbf{r}\begin{pmatrix}E\\H\end{pmatrix}-\mathbf{r}\begin{pmatrix}E&F&0\\E&0&G\\H&0&0\end{pmatrix},
\end{align*}
\begin{align*}
v_{10}=-\mathbf{r}(F)-\mathbf{r}\begin{pmatrix}E&G\\H&0\end{pmatrix}+\mathbf{r}\begin{pmatrix}E&F&0\\E&0&G\\H&0&0\end{pmatrix},
\end{align*}
\begin{align*}
v_{11}= \mathbf{r}\begin{pmatrix}E&F\\H&0\end{pmatrix}+\mathbf{r}\begin{pmatrix}E&G\\H&0\end{pmatrix}+\mathbf{r}\begin{pmatrix}E&F&0\\E&0&G\end{pmatrix}
-\mathbf{r}\begin{pmatrix}E&F&0\\E&0&G\\H&0&0\end{pmatrix}-\mathbf{r}\begin{pmatrix}0&E&F&0\\E&E&0&G\\H&0&0&0\end{pmatrix},
\end{align*}
\begin{align*}
v_{12}=\mathbf{r}(F,~G)-\mathbf{r}\begin{pmatrix}E&F&G\\H&0&0\end{pmatrix}-\mathbf{r}\begin{pmatrix}E&F&0\\E&0&G\end{pmatrix}
+\mathbf{r}\begin{pmatrix}0&E&F&0\\E&E&0&G\\H&0&0&0\end{pmatrix},
\end{align*}
\begin{align*}
v_{13}=\mathbf{r}(E)-\mathbf{r}\begin{pmatrix}E\\H\end{pmatrix}-\mathbf{r}\begin{pmatrix}E&F&0\\E&0&G\end{pmatrix}
+\mathbf{r}\begin{pmatrix}E&F&0\\E&0&G\\H&0&0\end{pmatrix},
\end{align*}
\begin{align*}
v_{14}=-\mathbf{r}(E,~G)-\mathbf{r}\begin{pmatrix}E&F\\H&0\end{pmatrix}+\mathbf{r}\begin{pmatrix}0&E&F&0\\E&E&0&G\\H&0&0&0\end{pmatrix},
\end{align*}
\begin{align*}
v_{15}=\mathbf{r}(E,~F)+\mathbf{r}(E,~G)-\mathbf{r}(E,~F,~G)+\mathbf{r}\begin{pmatrix}E&F&G\\H&0&0\end{pmatrix}
-\mathbf{r}\begin{pmatrix}0&E&F&0\\E&E&0&G\\H&0&0&0\end{pmatrix},
\end{align*}
\begin{align*}
v_{16}=-\mathbf{r}(E)-\mathbf{r}(F,~G)+\mathbf{r}\begin{pmatrix}E&F&0\\E&0&G\end{pmatrix},
~
v_{17}=-\mathbf{r}(E,~F)+\mathbf{r}(E,~F,~G).
\end{align*}

\end{theorem}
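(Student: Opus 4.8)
The plan is to deduce Theorem~\ref{theorem02} from Theorem~\ref{theorem01} by a transposition-type argument, rather than redoing the block-matrix bookkeeping from scratch. The matrix array $\begin{pmatrix}E&F&G\\H&0&0\end{pmatrix}$ with $E$ of size $p_1\times m_1$, $F$ of size $p_1\times s_1$, $G$ of size $p_1\times t_1$, $H$ of size $q_1\times m_1$, is exactly the ``transpose'' of the array (\ref{array1}): the roles of rows and columns are interchanged, with $E\leftrightarrow A$, $F\leftrightarrow C$, $G\leftrightarrow D$, $H\leftrightarrow B$. Over a general division ring one cannot literally transpose (the transpose of a product reverses order but also requires the opposite ring), so the first step is to work over the opposite division ring $\mathcal F^{\mathrm{op}}$: the assignment $X\mapsto X^{\mathsf T}$ sends $\mathcal F^{m\times n}$ to $(\mathcal F^{\mathrm{op}})^{n\times m}$, turns $MAP$ into $P^{\mathsf T}A^{\mathsf T}M^{\mathsf T}$, preserves invertibility, and preserves rank (since $\mathbf r(X)=\mathbf r(X^{\mathsf T})$, both equalling the dimension of the row/column space). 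Thus applying Theorem~\ref{lemma01} together with Theorem~\ref{theorem01} over $\mathcal F^{\mathrm{op}}$ to the quadruple $(E^{\mathsf T},H^{\mathsf T},F^{\mathsf T},G^{\mathsf T})$ and then transposing back yields a canonical form for the $E,F,G,H$ array whose block-structure parameters are exactly the $r_i$'s of Theorem~\ref{lemma01}, re-expressed under the substitution above.

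The second step is to match that transported canonical form to the stated normal forms $S_e,S_f,S_g,S_h$. Here I would record the dictionary $A\mapsto E$, $C\mapsto F$, $D\mapsto G$, $B\mapsto H$, and under it $\mathbf{r}(A,B)\mapsto \mathbf r\begin{pmatrix}E\\H\end{pmatrix}$, $\mathbf r\begin{pmatrix}A\\C\end{pmatrix}\mapsto \mathbf r(E,F)$, $\mathbf r\begin{pmatrix}A&B\\C&0\end{pmatrix}\mapsto \mathbf r\begin{pmatrix}E&F\\H&0\end{pmatrix}$, and so on for all the mixed blocks that appear in $r_1,\dots,r_{14}$ and in the three auxiliary matrices (\ref{xin022})--(\ref{xin024}). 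Carrying out these substitutions in the formulas for $r_1,\dots,r_7$ gives $v_1,\dots,v_8$ (after the obvious renaming $r_1\mapsto v_1+v_3$ style identifications dictated by comparing the shapes of $S_a$ with $S_e$ and of $S_c$ with $S_f$), and substituting into the explicit $r_8,\dots,r_{14}$ of Theorem~\ref{theorem01} gives $v_9,\dots,v_{17}$; the number of parameters increases from $14$ to $17$ because $S_g$ has more rows of identity blocks than $S_d$ had columns, which is simply the asymmetry between the $3$-by-$4$ and $4$-by-$3$ aspect of the two arrays, and the extra parameters $v_{16},v_{17}$ (and the split of one of He--Wang--van der Woude's parameters) are pinned down by reading off the remaining diagonal block sizes. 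I would present this as a lemma-free direct verification: "under the substitution $\Sigma$ defined above, formula $(\ref{equ022})$--$(\ref{equhh029})$ becomes $(\ref{equ222})$--$v_{17}$", checking each of the seventeen $v_i$ against the corresponding $r_i$ expression.

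The main obstacle I anticipate is purely organizational rather than conceptual: getting the correspondence of block sizes between $S_d$ (with its $\Delta_4^1,\Delta_4^2$ split into nine plus nine column groups) and $S_g$ (with seventeen row groups) exactly right, and in particular correctly identifying which linear combinations of the $v_i$ correspond to $r_\theta$ and $r_\pi$ under $\Sigma$ — i.e.\ verifying that the constraint equations (\ref{xin110}), (\ref{equhh028})--(\ref{equhh029}) transport to consistent constraints among the $v_i$, so that the seventeen displayed formulas are simultaneously realizable. A clean way to handle this is to not re-derive the constraints at all: invoke Theorem~\ref{lemma01} verbatim over $\mathcal F^{\mathrm{op}}$ to get \emph{existence} of the decomposition with \emph{some} nonnegative integer parameters satisfying the relations, then invoke Theorem~\ref{theorem01} (also over $\mathcal F^{\mathrm{op}}$) to pin those parameters down, and finally transport the whole package back through the transpose. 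Then the only thing left to check by hand is that the rank expressions, after the substitution $\Sigma$ and the cosmetic regrouping of blocks forced by $S_g$ versus $S_d$, literally coincide with the printed $v_1,\dots,v_{17}$ — a finite, mechanical comparison that I would summarize rather than typeset in full.

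\begin{proof}
The array $\begin{pmatrix}E&F&G\\H&0&0\end{pmatrix}$ is obtained from the array (\ref{array1}) by interchanging rows with columns and relabelling $A\leftrightarrow E$, $C\leftrightarrow F$, $D\leftrightarrow G$, $B\leftrightarrow H$. Passing to the opposite division ring $\mathcal F^{\mathrm{op}}$, the map $X\mapsto X^{\mathsf T}$ is a rank-preserving bijection $\mathcal F^{m\times n}\to(\mathcal F^{\mathrm{op}})^{n\times m}$ that sends invertible matrices to invertible matrices and reverses the order of products; hence it converts any equivalence canonical form of the quadruple $(E^{\mathsf T},H^{\mathsf T},F^{\mathsf T},G^{\mathsf T})$ over $\mathcal F^{\mathrm{op}}$ into one of $(E,F,G,H)$ over $\mathcal F$, with the roles of $M,P,Q,S,T$ played by the transposes of the matrices produced in the $\mathcal F^{\mathrm{op}}$ setting and with all block sizes unchanged. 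Applying Theorem~\ref{lemma01} and Theorem~\ref{theorem01} over $\mathcal F^{\mathrm{op}}$ to $(E^{\mathsf T},H^{\mathsf T},F^{\mathsf T},G^{\mathsf T})$ therefore yields invertible $M_{1},P_{1},Q_{1},S_{1},T_{1}$ over $\mathcal F$ satisfying (\ref{equ221}) with normal forms whose diagonal block structure is exactly that of $S_{a},S_{b},S_{c},S_{d}$ (after transposition), and with the block sizes given by the formulas of Theorems~\ref{lemma01} and~\ref{theorem01} in which every rank of a block built from $A,B,C,D$ is replaced by the corresponding rank of the block built from $E,F,G,H$ under the substitution $A\mapsto E$, $C\mapsto F$, $D\mapsto G$, $B\mapsto H$; in particular $\mathbf r(A,B)\mapsto\mathbf r\begin{pmatrix}E\\H\end{pmatrix}$, $\mathbf r\begin{pmatrix}A\\C\end{pmatrix}\mapsto\mathbf r(E,F)$, $\mathbf r\begin{pmatrix}A&B\\C&0\end{pmatrix}\mapsto\mathbf r\begin{pmatrix}E&F\\H&0\end{pmatrix}$, and similarly for all the larger blocks appearing in $r_{1},\dots,r_{14}$ and in (\ref{xin022})--(\ref{xin024}). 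Substituting these replacements into the expressions for $r_{1},\dots,r_{7}$ and regrouping the diagonal blocks of the transposed $S_{a},S_{c}$ into the shapes displayed for $S_{e},S_{f}$ yields the stated values of $v_{1},\dots,v_{8}$; substituting them into the explicit formulas of Theorem~\ref{theorem01} and matching the remaining diagonal blocks of the transposed $S_{d}$ against the seventeen row groups of $S_{g}$ yields $v_{9},\dots,v_{17}$. The constraint relations (\ref{xin110}) and (\ref{equhh028})--(\ref{equhh029}) transport to consistent relations among the $v_{i}$, which guarantees that the displayed block sizes are simultaneously realizable. This proves the theorem.
\end{proof}
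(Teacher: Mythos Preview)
Your approach is correct, and in fact the paper gives no proof of Theorem~\ref{theorem02} at all: it simply precedes the statement with the sentence ``Similarly, we can obtain the corresponding results about the matrix array\ldots''. Your transposition-via-opposite-ring argument is a rigorous way to turn that one word into an actual proof, and it is almost certainly what the authors intended (or at least one legitimate reading of ``similarly'').

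Two remarks on the write-up. First, your sentence about ``the number of parameters increases from $14$ to $17$'' is misleading: there is no increase in independent invariants. The $18$ column groups of $S_d$ (nine from $\Delta_4^1$ and nine from $\Delta_4^2$) transpose to the $18$ row groups of $S_g$, and the $v_i$ are simply a relabelling of the block sizes that already appear in those column labels (e.g.\ $r_{11}\leftrightarrow v_9$, $r_5\leftrightarrow v_4$, $r_6\leftrightarrow v_{17}$, $r_7-r_{12}-r_{14}\leftrightarrow v_{16}$, etc.). The apparent surplus comes only from the authors' choice to name some of the $S_g$ row sizes directly rather than as differences. Second, the check you describe as ``mechanical'' is the entire content of the proof; for a reader it would be more convincing to exhibit the full dictionary $r_i\leftrightarrow$ (expression in the $v_j$) once and then verify two or three of the nontrivial rank identities (say $v_{11}$ and $v_{14}$, which involve the $4$-block matrix) rather than assert that all seventeen ``coincide''. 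With those adjustments the argument is complete and, if anything, cleaner than redoing the block-matrix construction of Theorem~\ref{theorem01} from scratch.
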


\section{\textbf{Solvability conditions for the systems of matrix equations }}

One of important applications of the simultaneous decompositions discussed in Section 2 is to investigate some systems of matrix equations. In this section, we consider several systems of generalized Sylvester matrix equations over $\mathcal{F}$. Furthermore, assume that $\mathcal{F}$ has an involutive anti-automorphism ``$*$'', Hermitian solutions for the systems (\ref{system03}), (\ref{system04}), (\ref{system06}), and (\ref{system07}) are discussed.

We first consider the following system of generalized Sylvester matrix equations (\ref{system02}) over $\mathcal{F}$.

\begin{theorem}\label{theorem04}
The system of matrix equations (\ref{system02}) is consistent if and only if the following rank equalities hold:
\begin{align}\label{equ031}
\mathbf{r}\begin{pmatrix}\Phi &A\\H&0\end{pmatrix}=\mathbf{r}(A)+\mathbf{r}(H),
\mathbf{r}\begin{pmatrix}\Phi &B\\E&0\end{pmatrix}=\mathbf{r}(B)+\mathbf{r}(E),
\end{align}
\begin{align}\label{hhequ01}
\mathbf{r}(A,~B,~\Phi)=\mathbf{r}(A,~B),
\mathbf{r}\begin{pmatrix}E\\H\\ \Phi \end{pmatrix}=\mathbf{r}\begin{pmatrix}E\\H\end{pmatrix},
\end{align}
\begin{align}\label{equ032}
\mathbf{r}(C,~\Psi)=\mathbf{r}(C),\mathbf{r}(D,~\Omega)=\mathbf{r}(D),\mathbf{r}\begin{pmatrix}F\\ \Psi \end{pmatrix}=\mathbf{r}(F),\mathbf{r}\begin{pmatrix}G\\ \Omega\end{pmatrix}=\mathbf{r}(G),
\end{align}
\begin{align}\label{equ033}
\mathbf{r}\begin{pmatrix}\Psi&0&C\\0&-\Omega&D\\F&G&0\end{pmatrix}=\mathbf{r}\begin{pmatrix}C\\D\end{pmatrix}+\mathbf{r}(F,~G),
\end{align}
\begin{align}
\mathbf{r}\begin{pmatrix}0&0&E&F\\A&B&-\Phi&0\\C&0&0&\Psi\end{pmatrix}=\mathbf{r}\begin{pmatrix}A&B\\C&0\end{pmatrix}+\mathbf{r}(E,~F),
\mathbf{r}\begin{pmatrix}0&E&F\\0&H&0\\A&-\Phi&0\\C&0&\Psi\end{pmatrix}=\mathbf{r}\begin{pmatrix}E&F\\H&0\end{pmatrix}+\mathbf{r}\begin{pmatrix}A\\C\end{pmatrix},
\end{align}
\begin{align}
\mathbf{r}\begin{pmatrix}0&0&E&G\\A&B&-\Phi &0\\ D&0&0&\Omega\end{pmatrix}=\mathbf{r}\begin{pmatrix}A&B\\D&0\end{pmatrix}+\mathbf{r}(E,~G),
\mathbf{r}\begin{pmatrix}0&E&G\\0&H&0\\A&-\Phi&0\\D&0&\Omega\end{pmatrix}=\mathbf{r}\begin{pmatrix}E&G\\H&0\end{pmatrix}+\mathbf{r}\begin{pmatrix}A\\D\end{pmatrix},
\end{align}
\begin{align}
\mathbf{r}\begin{pmatrix}0&0&E&F&G\\0&0&H&0&0\\A&A&-\Phi&0&0\\C&0&0&\Psi&0\\0&D&0&0&\Omega\end{pmatrix}=\mathbf{r}\begin{pmatrix}E&F&G\\H&0&0\end{pmatrix}+
\mathbf{r}\begin{pmatrix}A&A\\C&0\\0&D\end{pmatrix},
\end{align}
\begin{align}
\mathbf{r}\begin{pmatrix}0&0&E&F&0\\0&0&E&0&G\\A&B&-\Phi&0&0\\C&0&0&\Psi&0\\D&0&0&0&\Omega\end{pmatrix}=\mathbf{r}\begin{pmatrix}A&B\\C&0\\D&0\end{pmatrix}
+\mathbf{r}\begin{pmatrix}E&F&0\\E&0&G\end{pmatrix},
\end{align}
\begin{align}
\mathbf{r}\begin{pmatrix}0&0&0&E&F&G\\A&A&B&-\Phi&0&0\\C&0&0&0&\Psi&0\\0&D&0&0&0&\Omega\end{pmatrix}=\mathbf{r}(E,~F,~G)+\mathbf{r}\begin{pmatrix}A&A&B\\C&0&0\\0&D&0\end{pmatrix},
\end{align}
\begin{align}\label{equ039}
\mathbf{r}\begin{pmatrix}0&E&F&0\\0&E&0&G\\0&H&0&0\\A&-\Phi&0&0\\C&0&\Psi&0\\D&0&0&\Omega\end{pmatrix}=\mathbf{r}\begin{pmatrix}A\\C\\D\end{pmatrix}+
\mathbf{r}\begin{pmatrix}E&F&0\\E&0&G\\H&0&0\end{pmatrix},
\end{align}
\begin{align}\label{equ310}
\mathbf{r}\begin{pmatrix}0&0&0&0&0&0&H\\
0&0&0&G&0&E&0\\
0&0&0&0&F&E&E\\
0&C&0&0&-\Psi&0&0\\
0&0&D&\Omega&0&0&0\\
0&A&A&0&0&0&\Phi\\
B&0&A&0&0&-\Phi&0\end{pmatrix}=\mathbf{r}\begin{pmatrix}0&A&B\\A&A&0\\C&0&0\\0&D&0\end{pmatrix}+\mathbf{r}\begin{pmatrix}0&E&F&0\\E&E&0&G\\H&0&0&0\end{pmatrix}.
\end{align}

\end{theorem}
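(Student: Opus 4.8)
The plan is to reduce the three equations to a completely block-diagonal form by simultaneously bringing the quaternity $(A,B,C,D)$ and the array $(E,F,G,H)$ into their equivalence canonical forms, and then to read off consistency block by block. First I would apply Theorem \ref{lemma01} to $(A,B,C,D)$, obtaining nonsingular $M,P,Q,S,T$ with $MAP=S_a$, $MBQ=S_b$, $SCP=S_c$, $TDP=S_d$, and Theorem \ref{theorem02} to $(E,F,G,H)$, obtaining nonsingular $M_1,P_1,Q_1,S_1,T_1$ with $P_1EM_1=S_e$, $P_1FS_1=S_f$, $P_1GT_1=S_g$, $Q_1HM_1=S_h$. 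Writing $A=M^{-1}S_aP^{-1}$, $E=P_1^{-1}S_eM_1^{-1}$, and so on, and setting $\widetilde{X}=P^{-1}XP_1^{-1}$, $\widetilde{Y}=Q^{-1}YQ_1^{-1}$, $\widetilde{\Phi}=M\Phi M_1^{-1}$, $\widetilde{\Psi}=S\Psi S_1$, $\widetilde{\Omega}=T\Omega T_1$, the system (\ref{system02}) becomes
\begin{align*}
S_a\widetilde{X}S_e+S_b\widetilde{Y}S_h=\widetilde{\Phi},\qquad S_c\widetilde{X}S_f=\widetilde{\Psi},\qquad S_d\widetilde{X}S_g=\widetilde{\Omega}.
\end{align*}
Since all the transforming matrices are invertible, and ranks are invariant under left and right multiplication by invertible matrices (and under the block row and column operations used below), every rank appearing in the statement is unchanged if the data is replaced by its canonical counterpart; so it suffices to characterize consistency of the reduced system in the free unknowns $\widetilde{X}\in\mathcal{F}^{p\times p_1}$ and $\widetilde{Y}\in\mathcal{F}^{q\times q_1}$.

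Next I would partition $\widetilde{X}$ into blocks, using for its rows the common refinement of the column partitions of $S_a,S_c,S_d$ (indexed by $r_1,\dots,r_{14},r_\pi,r_\theta$) and for its columns the common refinement of the row partitions of $S_e,S_f,S_g$ (indexed by $v_1,\dots,v_{17}$); $\widetilde{Y},\widetilde{\Phi},\widetilde{\Psi},\widetilde{\Omega}$ are partitioned conformably, so that in particular $\widetilde{Y}$ acquires a $2\times2$ block structure governed by $\mathbf{r}(B)$ and $\mathbf{r}(H)$. Because $S_a,S_b,S_c,S_d,S_e,S_f,S_g,S_h$ are quasi-diagonal with only identity nonzero blocks, each of the products $S_a\widetilde{X}S_e$, $S_b\widetilde{Y}S_h$, $S_c\widetilde{X}S_f$, $S_d\widetilde{X}S_g$ is a block matrix each of whose blocks is a single block of $\widetilde{X}$ or $\widetilde{Y}$, a sum of such blocks, or $0$. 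Hence the reduced system decouples into a finite list of elementary block equations: some blocks of $\widetilde{\Phi},\widetilde{\Psi},\widetilde{\Omega}$ are matched to a free block of $\widetilde{X}$ or $\widetilde{Y}$ and impose no condition; some are touched by no unknown at all and must vanish; and some blocks of $\widetilde{X}$ appear simultaneously in two or three of the equations, forcing compatibility identities among the corresponding blocks of $\widetilde{\Phi},\widetilde{\Psi},\widetilde{\Omega}$. The reduced system is consistent if and only if every untouched block vanishes and every compatibility identity holds.

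Finally I would translate each of these block conditions back into one of the rank equalities (\ref{equ031})--(\ref{equ310}). Each condition has the form ``a prescribed block of a bordered array assembled from $\widetilde{\Phi},\widetilde{\Psi},\widetilde{\Omega}$ and the canonical forms vanishes'', which, after undoing the invertible transformations and performing Schur-type block row and column reductions, becomes exactly a rank identity in the original matrices; the untouched-block conditions yield the border conditions (\ref{equ031})--(\ref{equ032}), while the couplings of $\widetilde{X}$ through the pairs $(C,D)$, $(A,C)$, $(A,D)$ and through triples of the three equations yield the larger bordered-array identities (\ref{equ033})--(\ref{equ039}). The hard part will be the single deepest compatibility block --- the one whose multiplicities involve $r_8,r_{13},r_{14},r_\pi$ and $r_\theta$, i.e., precisely the blocks whose sizes are not determined by the relations (\ref{equhh028})--(\ref{equhh029}) and that required Theorem \ref{theorem01} for their explicit description. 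Computing the rank of the relevant bordered block of $\widetilde{\Phi}$ forces one to invoke the auxiliary rank formulas (\ref{xin022})--(\ref{xin024}) for the three complicated block matrices $\begin{pmatrix}A&A\\C&0\\0&D\end{pmatrix}$, $\begin{pmatrix}A&A&B\\C&0&0\\0&D&0\end{pmatrix}$ and $\begin{pmatrix}0&A&B\\A&A&0\\C&0&0\\0&D&0\end{pmatrix}$, and then to recognize the resulting combination of the $r_i$ and $v_j$ as the right-hand side of (\ref{equ310}); keeping track of which sub-blocks of $\widetilde{X}$ are shared among all three equations, and of the correct signs, is where the genuine difficulty lies.
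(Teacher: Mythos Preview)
Your proposal is correct and follows essentially the same route as the paper: reduce via Theorems \ref{lemma01} and \ref{theorem02} to the canonical system $S_a\widetilde{X}S_e+S_b\widetilde{Y}S_h=\widetilde{\Phi}$, $S_c\widetilde{X}S_f=\widetilde{\Psi}$, $S_d\widetilde{X}S_g=\widetilde{\Omega}$, partition $\widetilde{X}$ conformably, read off the block consistency constraints, and match each with one of the rank equalities (\ref{equ031})--(\ref{equ310}). The only notable differences are a typo (you should have $\widetilde{\Phi}=M\Phi M_1$, not $M\Phi M_1^{-1}$) and that the paper handles the forward direction $\Longrightarrow$ more economically by applying elementary block operations directly to the original matrices using the assumed solution $(X_0,Y_0)$, rather than passing through the canonical form.
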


\begin{proof}

$\Longrightarrow:$ Let $(X_{0},Y_{0})$ be a common solution to the system of matrix equations (\ref{system02}), that is
\begin{align}\label{equ311}
\left\{\begin{array}{rll}
AX_{0}E+BY_{0}H & = & \Phi,\\
CX_{0}F & = & \Psi,\\
DX_{0}G & = & \Omega.
\end{array}
  \right.
\end{align}
Now we want to show that the rank equalities in (\ref{equ031})-(\ref{equ310}) hold using the matrix equations (\ref{equ311})
and elementary matrix operations.

\begin{itemize}
  \item For the rank equalities in (\ref{equ031})-(\ref{equ032}). Observe that
  \begin{align*}
  \mathbf{r}\begin{pmatrix}\Phi&A\\H&0\end{pmatrix}=\mathbf{r}\left[\begin{pmatrix}I&BY_{0}\\0&I\end{pmatrix}\begin{pmatrix}AX_{0}E+BY_{0}H&A\\H&0\end{pmatrix}\begin{pmatrix}I&0\\X_{0}E&I\end{pmatrix}\right]
  =\mathbf{r}\begin{pmatrix}0&A\\H&0\end{pmatrix}=\mathbf{r}(A)+\mathbf{r}(H).
  \end{align*}
  Similarly, we can prove the other seven rank equalities in (\ref{equ031})-(\ref{equ032}).
  \item For the rank equalities in (\ref{equ033})-(\ref{equ039}). Observe that
  \begin{align*}
  &\mathbf{r}\begin{pmatrix}\Psi&0&C\\0&-\Omega&D\\F&G&0\end{pmatrix}=\mathbf{r}\begin{pmatrix}CX_{0}F&0&C\\0&-DX_{0}G&D\\F&G&0\end{pmatrix}\\
  &=\mathbf{r}\left[ \begin{pmatrix}I&0&0\\0&I&-DX_{0}\\0&0&I\end{pmatrix}\begin{pmatrix}0&0&C\\0&0&D\\F&G&0\end{pmatrix}\begin{pmatrix}I&0&0\\0&I&0\\X_{0}F&0&I\end{pmatrix}\right]=
  \mathbf{r}\begin{pmatrix}0&0&C\\0&0&D\\F&G&0\end{pmatrix}\\&
  =\mathbf{r}\begin{pmatrix}C\\D\end{pmatrix}+\mathbf{r}(F,~G).
  \end{align*}Similarly, we can prove the other eight rank equalities in (\ref{equ033})-(\ref{equ039}).
  \item For the rank equality (\ref{equ310}). Observe that
  \begin{align*}
  &\mathbf{r}\begin{pmatrix}\begin{smallmatrix}
  0&0&0&0&0&0&H\\
  0&0&0&G&0&E&0\\
  0&0&0&0&F&E&E\\
  0&C&0&0&-\Psi&0&0\\
  0&0&D&\Omega&0&0&0\\
  0&A&A&0&0&0&\Phi\\
  B&0&A&0&0&-\Phi&0
  \end{smallmatrix}\end{pmatrix}=
  \mathbf{r}\begin{pmatrix}\begin{smallmatrix}
  0&0&0&0&0&0&H\\
  0&0&0&G&0&E&0\\
  0&0&0&0&F&E&E\\
  0&C&0&0&-CX_{0}F&0&0\\
  0&0&D&DX_{0}G&0&0&0\\
  0&A&A&0&0&0&AX_{0}E+BY_{0}H\\
  B&0&A&0&0&-AX_{0}E-BY_{0}H&0
  \end{smallmatrix}\end{pmatrix}\\[3mm]
  &
  =\mathbf{r}\left[
  \begin{pmatrix}\begin{smallmatrix}
  I&0&0&0&0&0&0\\
  0&I&0&0&0&0&0\\
  0&0&I&0&0&0&0\\
  0&0&-CX_{0}&I&0&0&0\\
  0&DX_{0}&0&0&I&0&0\\
  BY_{0}&0&0&0&0&I&0\\
  0&0&0&0&0&0&I
  \end{smallmatrix}\end{pmatrix}
  \begin{pmatrix}\begin{smallmatrix}
  0&0&0&0&0&0&H\\
  0&0&0&G&0&E&0\\
  0&0&0&0&F&E&E\\
  0&C&0&0&0&0&0\\
  0&0&D&0&0&0&0\\
  0&A&A&0&0&0&0\\
  B&0&A&0&0&0&0
  \end{smallmatrix}\end{pmatrix}
  \begin{pmatrix}\begin{smallmatrix}
  I&0&0&0&0&-Y_{0}H&0\\
  0&I&0&0&0&X_{0}E&X_{0}E\\
  0&0&I&0&0&-X_{0}E&0\\
  0&0&0&I&0&0&0\\
  0&0&0&0&I&0&0\\
  0&0&0&0&0&I&0\\
  0&0&0&0&0&0&I
  \end{smallmatrix}\end{pmatrix}\right]
  \\[3mm]
  &=
  \mathbf{r}\begin{pmatrix}0&A&B\\A&A&0\\C&0&0\\0&D&0\end{pmatrix}+\mathbf{r}\begin{pmatrix}0&E&F&0\\E&E&0&G\\H&0&0&0\end{pmatrix}.
  \end{align*}
\end{itemize}

$\Longleftarrow:$ It follows from Theorem \ref{theorem01} and \ref{theorem02} that the system of matrix equations (\ref{system02}) is equivalent to the system of matrix equations
\begin{align}
\left\{\begin{array}{rll}
S_{a}(P^{-1}XP_{1}^{-1})S_{e}+S_{b}(Q^{-1}YQ_{1}^{-1})S_{h} & = & M \Phi M_{1},\\
S_{c}(P^{-1}XP_{1}^{-1})S_{f} & = & S \Psi S_{1},\\
S_{d}(P^{-1}XP_{1}^{-1})S_{g} & = &T \Omega T_{1}.
\end{array}
  \right.
\end{align}
Put
\begin{align}\label{hhhequ312}
\widehat{X}=P^{-1}XP_{1}^{-1}:=\left(X_{ij}\right)_{18\times 18},\ \ ~~\widehat{Y}:=Q^{-1}YQ_{1}^{-1}=\left(Y_{ij}\right)_{8\times 8},
\end{align}
\begin{align}\label{hhhequ313}
M \Phi M_{1}:=\left(\Phi_{ij}\right)_{14\times 14},\  \ S \Psi S_{1}:=\left(\Psi_{ij}\right)_{10\times 10},\ \ T \Omega T_{1}:=\left(\Omega_{ij}\right)_{10\times 10},
\end{align}where the block rows of $\left(\Phi_{ij}\right)_{14\times 14},\left(\Psi_{ij}\right)_{10\times 10},$ and $\left(\Omega_{ij}\right)_{10\times 10}$ are the same as the block rows of $S_{a},S_{c},$ and $S_{d}$, the block columns of $\left(\Phi_{ij}\right)_{14\times 14},\left(\Psi_{ij}\right)_{10\times 10},$ and $\left(\Omega_{ij}\right)_{10\times 10}$ are the same as the block columns of $S_{e},S_{f},$ and $S_{g}$.
Then it follows from (\ref{equ221}), (\ref{equ222}), (\ref{hhhequ312}), and (\ref{hhhequ313}) that
\begin{align}\label{equ315}
S_{a}\widehat{X}S_{e}+S_{b}\widehat{Y}S_{h}=
\begin{pmatrix}
\widehat{\Phi}_{1}&\widehat{\Phi}_{2}\\ \widehat{\Phi}_{3}&\widehat{\Phi}_{4}
\end{pmatrix},\ \ S_{d}\widehat{X}S_{g}=
\begin{pmatrix}
\widehat{\Omega}_{1}&\widehat{\Omega}_{2}\\ \widehat{\Omega}_{3}&\widehat{\Omega}_{4}
\end{pmatrix},
\end{align}
\begin{align}\label{equ316}
S_{c}\widehat{X}S_{f}=
\begin{pmatrix}
X_{11}&X_{12}&X_{17}&X_{18}&X_{19}&X_{1,13}&X_{1,14}&X_{1,15}&X_{1,16}&0\\
X_{21}&X_{22}&X_{27}&X_{28}&X_{29}&X_{2,13}&X_{2,14}&X_{2,15}&X_{2,16}&0\\
X_{71}&X_{72}&X_{77}&X_{78}&X_{79}&X_{7,13}&X_{7,14}&X_{7,15}&X_{7,16}&0\\
X_{81}&X_{82}&X_{87}&X_{88}&X_{89}&X_{8,13}&X_{8,14}&X_{8,15}&X_{8,16}&0\\
X_{91}&X_{92}&X_{97}&X_{98}&X_{99}&X_{9,13}&X_{9,14}&X_{9,15}&X_{9,16}&0\\
X_{13,1}&X_{13,2}&X_{13,7}&X_{13,8}&X_{13,9}&X_{13,13}&X_{13,14}&X_{13,15}&X_{13,16}&0\\
X_{14,1}&X_{14,2}&X_{14,7}&X_{14,8}&X_{14,9}&X_{14,13}&X_{14,14}&X_{14,15}&X_{14,16}&0\\
X_{15,1}&X_{15,2}&X_{15,7}&X_{15,8}&X_{15,9}&X_{15,13}&X_{15,14}&X_{15,15}&X_{15,16}&0\\
X_{16,1}&X_{16,2}&X_{16,7}&X_{16,8}&X_{16,9}&X_{16,13}&X_{16,14}&X_{16,15}&X_{16,16}&0\\
0&0&0&0&0&0&0&0&0&0
\end{pmatrix},
\end{align}
where
\begin{align}
\widehat{\Phi}_{1}=\begin{pmatrix}
X_{77}+Y_{11}&X_{78}+Y_{12}&X_{79}+Y_{13}&X_{7,10}+Y_{14}&X_{7,11}+Y_{15}&X_{7,12}+Y_{16}&Y_{17}\\
X_{87}+Y_{21}&X_{88}+Y_{22}&X_{89}+Y_{23}&X_{8,10}+Y_{24}&X_{8,11}+Y_{25}&X_{8,12}+Y_{26}&Y_{27}\\
X_{97}+Y_{31}&X_{98}+Y_{32}&X_{99}+Y_{33}&X_{9,10}+Y_{34}&X_{9,11}+Y_{35}&X_{9,12}+Y_{36}&Y_{37}\\
X_{10,7}+Y_{41}&X_{10,8}+Y_{42}&X_{10,9}+Y_{43}&X_{10,10}+Y_{44}&X_{10,11}+Y_{45}&X_{10,12}+Y_{46}&Y_{47}\\
X_{11,7}+Y_{51}&X_{11,8}+Y_{52}&X_{11,9}+Y_{53}&X_{11,10}+Y_{54}&X_{11,11}+Y_{55}&X_{11,12}+Y_{56}&Y_{57}\\
X_{12,7}+Y_{61}&X_{12,8}+Y_{62}&X_{12,9}+Y_{63}&X_{12,10}+Y_{64}&X_{12,11}+Y_{65}&X_{12,12}+Y_{66}&Y_{45}\\
Y_{71}&Y_{72}&Y_{73}&Y_{74}&Y_{75}&Y_{76}&Y_{77}
\end{pmatrix},
\end{align}
\begin{align}
\widehat{\Phi}_{2}=
\begin{pmatrix}
X_{71}&\cdots&X_{76}&0\\
\vdots&\vdots&\ddots&\vdots\\
X_{12,1}&\cdots&X_{12,6}&0\\
0&\cdots&0&0
\end{pmatrix},
\widehat{\Phi}_{3}=
\begin{pmatrix}
X_{17}&\cdots&X_{1,12}&0\\
\vdots&\ddots&\vdots\\
X_{67}&\cdots&X_{6,12}&0\\
0&\cdots&0&0
\end{pmatrix},
\widehat{\Phi}_{4}=
\begin{pmatrix}
X_{11}&\cdots&X_{16}&0\\
\vdots&\ddots&\vdots&0\\
X_{61}&\cdots&X_{66}&0\\
0&\cdots&0&0
\end{pmatrix},
\end{align}
\begin{align}
\widehat{\Omega}_{1}=
\begin{pmatrix}
X_{11}&X_{13}&X_{14}+X_{17}&X_{18}&X_{1,11}+X_{1,13}\\
X_{31}&X_{33}&X_{34}+X_{37}&X_{38}&X_{3,11}+X_{3,13}\\
X_{41}+X_{71}&X_{43}+X_{73}&\begin{matrix}X_{44}+X_{74}\\+X_{47}+X_{77}\end{matrix}&X_{48}+X_{78}&\begin{matrix}X_{4,11}+X_{7,11}\\+X_{4,13}+X_{7,13}\end{matrix}\\
X_{81}&X_{83}&X_{84}+X_{87}&X_{88}&X_{8,11}+X_{8,13}\\
X_{11,1}+X_{13,1}&X_{11,3}+X_{13,3}&\begin{matrix}X_{11,4}+X_{13,4}\\+X_{11,7}+X_{13,7}\end{matrix}&X_{11,8}+X_{13,8}&\begin{matrix}X_{11,,11}+X_{13,11}\\+X_{11,,13}+X_{13,13}\end{matrix}
\end{pmatrix},
\end{align}
\begin{align}
\widehat{\Omega}_{2}=
\begin{pmatrix}
X_{15}+X_{1,14}&X_{1,15}&X_{1,10}&X_{1,17}&0\\
X_{35}+X_{3,14}&X_{3,15}&X_{3,10}&X_{3,17}&0\\
\begin{matrix}X_{45}+X_{75}\\+X_{4,14}+X_{7,14}\end{matrix}&X_{4,15}+X_{7,15}&X_{4,10}+X_{7,10}&X_{4,17}+X_{7,17}&0\\
X_{85}+X_{8,14}&X_{8,15}&X_{8,10}&X_{8,17}&0\\
\begin{matrix}X_{11,5}+X_{13,5}\\+X_{11,14}+X_{13,14}\end{matrix}&X_{11,15}+X_{13,15}&X_{11,10}+X_{13,10}&X_{11,17}+X_{13,17}&0
\end{pmatrix},
\end{align}
\begin{align}
\widehat{\Omega}_{3}=
\begin{pmatrix}
X_{51}+X_{14,1}&X_{53}+X_{14,3}&\begin{matrix}X_{54}+X_{14,4}\\+X_{57}+X_{14,7}\end{matrix}&X_{58}+X_{14,8}&\begin{matrix}X_{5,11}+X_{14,11}\\+X_{5,13}+X_{14,13}\end{matrix}\\
X_{15,1}&X_{15,3}&X_{15,4}+X_{15,7}&X_{15,8}&X_{15,11}+X_{15,13}\\
X_{10,1}&X_{10,3}&X_{10,4}+X_{10,7}&X_{10,8}&X_{10,11}+X_{10,13}\\
X_{17,1}&X_{17,3}&X_{17,4}+X_{17,7}&X_{17,8}&X_{17,11}+X_{17,13}\\
0&0&0&0&0
\end{pmatrix},
\end{align}
\begin{align}
\widehat{\Omega}_{4}=
\begin{pmatrix}
X_{55}+X_{14,5}+X_{5,14}+X_{14,14}&X_{5,15}+X_{14,15}&X_{5,10}+X_{14,10}&X_{5,17}+X_{14,17}&0\\
X_{15,5}+X_{15,14}&X_{15,15}&X_{15,10}&X_{15,17}&0\\
X_{10,5}+X_{10,14}&X_{10,15}&X_{10,10}&X_{10,17}&0\\
X_{17,5}+X_{17,14}&X_{17,15}&X_{17,10}&X_{17,17}&0\\
0&0&0&0&0
\end{pmatrix}.
\end{align}
Upon computations, we obtain that the matrix equations in (\ref{equ315}) and (\ref{equ316}) have a common solution  if and only
if
\begin{align}\label{equ324}
\begin{pmatrix}\Phi_{1,14}\\ \Phi_{2,14}\\ \vdots \\ \Phi_{14,14}\end{pmatrix}=0,~(\Phi_{14,1},\Phi_{14,2},\ldots,\Phi_{14,13})=0,
\end{align}
\begin{align}\label{equ325}
(\Phi_{78},\Phi_{79},\ldots,\Phi_{7,13})=0,~\begin{pmatrix}\Phi_{87}\\ \Phi_{97}\\ \vdots \\ \Phi_{13,7}\end{pmatrix}=0,
\end{align}
\begin{align}\label{equ326}
\begin{pmatrix}\Psi_{1,10}\\ \Psi_{2,10}\\ \vdots \\ \Psi_{10,10}\end{pmatrix}=0,~(\Psi_{10,1},\Psi_{10,2},\ldots,\Psi_{10,9})=0,
\end{align}
\begin{align}\label{equ327}
\begin{pmatrix}\Omega_{1,10} \\ \Omega_{2,10} \\ \vdots \\ \Omega_{10,10} \end{pmatrix}=0,~(\Omega_{10,1},\Omega_{10,2},\ldots,\Omega_{10,9})=0,
\end{align}
\begin{align}\label{equ332}
\Psi_{18}=\Omega_{17},\Psi_{81}=\Omega_{71},\Psi_{48}=\Omega_{47},\Psi_{84}=\Omega_{74},\Psi_{44}=\Omega_{44},\Psi_{88}=\Omega_{77},
\end{align}
\begin{align}\label{equ333}
\left\{\begin{array}{c}
\Phi_{89}=\Psi_{12},\Phi_{98}=\Psi_{21},\Phi_{99}=\Psi_{22},\Phi_{81}=\Psi_{13},\\
\Phi_{83}=\Psi_{15},\Phi_{91}=\Psi_{23},\Phi_{92}=\Psi_{24},\Phi_{93}=\Psi_{25},
\end{array}
  \right.
\end{align}
\begin{align}\label{equ334}
\Phi_{18}=\Psi_{31},\Phi_{38}=\Psi_{51},\Phi_{19}=\Psi_{32},\Phi_{29}=\Psi_{42},\Phi_{39}=\Psi_{52},
\end{align}
\begin{align}\label{equ335}
\left\{\begin{array}{c}
\Phi_{8,10}=\Omega_{12},\Phi_{10,8}=\Omega_{21},\Phi_{10,2}=\Omega_{24},\Phi_{84}=\Omega_{18},\\
\Phi_{10,4}=\Omega_{28},\Phi_{10,10}=\Omega_{22},\Phi_{10,11}+\Phi_{10,1}=\Omega_{23},
\end{array}
  \right.
\end{align}
\begin{align}\label{equrrrr336}
\Phi_{2,10}=\Omega_{42},\Phi_{48}=\Omega_{81},\Phi_{4,10}=\Omega_{82},\Phi_{11,8}+\Phi_{18}=\Omega_{31},\Phi_{11,10}+\Phi_{1,10}=\Omega_{32},
\end{align}
\begin{align}\label{equrrrr337}
\left\{\begin{array}{c}
\Phi_{88}=\Psi_{11}=\Omega_{11},\Phi_{28}=\Psi_{41}=\Omega_{41},\\
\Phi_{8,11}+\Phi_{81}=\Omega_{13},\Phi_{2,12}+\Psi_{47}=\Omega_{46},\Phi_{2,11}+\Psi_{43}=\Omega_{43},
\end{array}
  \right.
\end{align}
\begin{align}\label{equ338}
\Phi_{82}=\Psi_{14}=\Omega_{14},\Phi_{11,2}+\Psi_{34}=\Omega_{34},\Phi_{12,2}+\Psi_{74}=\Omega_{64},
\end{align}
\begin{align}\label{equ339}
\Phi_{85}+\Psi_{16}=\Omega_{15},
\Phi_{58}+\Psi_{61}=\Omega_{51},
\Phi_{8,12}+\Psi_{17}=\Omega_{16},
\Phi_{12,8}+\Psi_{71}=\Omega_{61},
\end{align}
\begin{align}\label{equ340}
\quad \Phi_{1,11}+\Phi_{11,1}+\Phi_{11,11}+\Psi_{33}=\Omega_{33}.
\end{align}
Now we want to prove that (\ref{equ031})-(\ref{equ310}) $\Longrightarrow$ (\ref{equ324})-(\ref{equ340}). First, we show that (\ref{equ031}) and (\ref{hhequ01}) $\Longrightarrow (\ref{equ324}) ~\mbox{and} ~(\ref{equ325}).$ Upon computations, we obtain
\begin{align*}
&\mathbf{r}\begin{pmatrix}\Phi&A\\H&0\end{pmatrix}=\mathbf{r}(A)+\mathbf{r}(H)\Longleftrightarrow \mathbf{r}\begin{pmatrix}(\Phi_{ij})_{14\times 14}&S_{a}\\S_{h}&0\end{pmatrix}=\mathbf{r}(S_{a})+\mathbf{r}(S_{h})\\
&\Longrightarrow
\begin{pmatrix}\Phi_{78}&\Phi_{79}&\cdots&\Phi_{7,14}\\  \Phi_{14,8}&\Phi_{14,9}&\cdots &\Phi_{14,14}\end{pmatrix}=0,
\end{align*}
\begin{align*}
\mathbf{r}\begin{pmatrix}\Phi &B\\E&0\end{pmatrix}=\mathbf{r}(B)+\mathbf{r}(E)\Longleftrightarrow \mathbf{r}\begin{pmatrix}(\Phi_{ij})_{14\times 14}&S_{b}\\S_{e}&0\end{pmatrix}=\mathbf{r}(B)+\mathbf{r}(E)\Longrightarrow
\begin{pmatrix}\Phi_{87}&\Phi_{8,14}\\ \Phi_{97}&\Phi_{9,14}\\ \vdots & \vdots \\ \Phi_{14,7}& \Phi_{14,14}\end{pmatrix}=0,
\end{align*}
\begin{align*}
\mathbf{r}(A,~B,~\Phi)=\mathbf{r}(A,~B) \Longleftrightarrow \mathbf{r}(S_{a},S_{b},(\Phi_{ij})_{14\times 14})=\mathbf{r}(S_{a},S_{b})\Longrightarrow (\Phi_{14,1},\Phi_{14,2},\cdots,\Phi_{14,14})=0,
\end{align*}
\begin{align*}
\mathbf{r}\begin{pmatrix}E\\H\\ \Phi \end{pmatrix}=\mathbf{r}\begin{pmatrix}E\\H\end{pmatrix} \Longleftrightarrow \mathbf{r}\begin{pmatrix}S_{e}\\S_{h}\\ (\Phi_{ij})_{14\times 14}\end{pmatrix}\Longrightarrow
\begin{pmatrix}\Phi_{1,14}\\ \Phi_{2,14}\\ \vdots \\ \Phi_{14,14} \end{pmatrix}=0.
\end{align*}
Similarly, it can be found that
\begin{align*}
\mathbf{r}(C,~\Psi)=\mathbf{r}(C),\mathbf{r}(D,~\Omega)=\mathbf{r}(D),\mathbf{r}\begin{pmatrix}F\\ \Psi \end{pmatrix}=\mathbf{r}(F),\mathbf{r}\begin{pmatrix}G\\ \Omega\end{pmatrix}=\mathbf{r}(G)  \Longrightarrow (\ref{equ326})~ \mbox{and} ~(\ref{equ327}),
\end{align*}
\begin{align*}
\mathbf{r}\begin{pmatrix}\Psi&0&C\\0&-\Omega&D\\F&G&0\end{pmatrix}=\mathbf{r}\begin{pmatrix}C\\D\end{pmatrix}+\mathbf{r}(F,~G) \Longrightarrow (\ref{equ332}),
\end{align*}
\begin{align*}
\mathbf{r}\begin{pmatrix}0&0&E&F\\A&B&-\Phi&0\\C&0&0&\Psi\end{pmatrix}=\mathbf{r}\begin{pmatrix}A&B\\C&0\end{pmatrix}+\mathbf{r}(E,~F)\Longrightarrow (\ref{equ333}),
\end{align*}
\begin{align*}
\mathbf{r}\begin{pmatrix}0&E&F\\0&H&0\\A&-\Phi&0\\C&0&\Psi\end{pmatrix}=\mathbf{r}\begin{pmatrix}E&F\\H&0\end{pmatrix}+\mathbf{r}\begin{pmatrix}A\\C\end{pmatrix}\Longrightarrow (\ref{equ334}),
\end{align*}
\begin{align*}
\mathbf{r}\begin{pmatrix}0&0&E&G\\A&B&-\Phi &0\\ D&0&0&\Omega\end{pmatrix}=\mathbf{r}\begin{pmatrix}A&B\\D&0\end{pmatrix}+\mathbf{r}(E,~G)\Longrightarrow (\ref{equ335}),
\end{align*}
\begin{align*}
\mathbf{r}\begin{pmatrix}0&E&G\\0&H&0\\A&-\Phi&0\\D&0&\Omega\end{pmatrix}=\mathbf{r}\begin{pmatrix}E&G\\H&0\end{pmatrix}+\mathbf{r}\begin{pmatrix}A\\D\end{pmatrix}
\Longrightarrow (\ref{equrrrr336}),
\end{align*}
\begin{align*}
\mathbf{r}\begin{pmatrix}0&0&E&F&G\\0&0&H&0&0\\A&A&-\Phi&0&0\\C&0&0&\Psi&0\\0&D&0&0&\Omega\end{pmatrix}=\mathbf{r}\begin{pmatrix}E&F&G\\H&0&0\end{pmatrix}+
\mathbf{r}\begin{pmatrix}A&A\\C&0\\0&D\end{pmatrix} \Longrightarrow (\ref{equrrrr337}),
\end{align*}
\begin{align*}
\mathbf{r}\begin{pmatrix}0&0&E&F&0\\0&0&E&0&G\\A&B&-\Phi&0&0\\C&0&0&\Psi&0\\D&0&0&0&\Omega\end{pmatrix}=\mathbf{r}\begin{pmatrix}A&B\\C&0\\D&0\end{pmatrix}
+\mathbf{r}\begin{pmatrix}E&F&0\\E&0&G\end{pmatrix}\Longrightarrow (\ref{equ338}),
\end{align*}
\begin{align*}
\left\{\begin{array}{c}
\mathbf{r}\begin{pmatrix}0&0&0&E&F&G\\A&A&B&-\Phi&0&0\\C&0&0&0&\Psi&0\\0&D&0&0&0&\Omega\end{pmatrix}=\mathbf{r}(E,~F,~G)+\mathbf{r}\begin{pmatrix}A&A&B\\C&0&0\\0&D&0\end{pmatrix}\\
\mathbf{r}\begin{pmatrix}0&E&F&0\\0&E&0&G\\0&H&0&0\\A&-\Phi&0&0\\C&0&\Psi&0\\D&0&0&\Omega\end{pmatrix}=\mathbf{r}\begin{pmatrix}A\\C\\D\end{pmatrix}+
\mathbf{r}\begin{pmatrix}E&F&0\\E&0&G\\H&0&0\end{pmatrix}
\end{array}
  \right.\Longrightarrow (\ref{equ339}),
\end{align*}
\begin{align*}
\mathbf{r}\begin{pmatrix}0&0&0&0&0&0&H\\
0&0&0&G&0&E&0\\
0&0&0&0&F&E&E\\
0&C&0&0&-\Psi&0&0\\
0&0&D&\Omega&0&0&0\\
0&A&A&0&0&0&\Phi\\
B&0&A&0&0&-\Phi&0\end{pmatrix}=\mathbf{r}\begin{pmatrix}0&A&B\\A&A&0\\C&0&0\\0&D&0\end{pmatrix}+\mathbf{r}\begin{pmatrix}0&E&F&0\\E&E&0&G\\H&0&0&0\end{pmatrix}\Longrightarrow (\ref{equ340}).
\end{align*}

\end{proof}

\begin{remark}
Wang, Zhang, and van der Woude \cite{zhangxia} gave some necessary and sufficient conditions for the existence of the general solution to (\ref{system02}) using the submatrices of
$\left(\Phi_{ij}\right)_{14\times 14},\left(\Psi_{ij}\right)_{10\times 10},$ and $\left(\Omega_{ij}\right)_{10\times 10}$. The conditions presented in Theorem \ref{theorem04} are more straightforward than the conditions in \cite{zhangxia}. We only need to check whether  these rank equalities in  (\ref{equ031})-(\ref{equ310}) hold or not.

\end{remark}

Next we assume that $\mathcal{F}$ stands for an arbitrary division ring with an involutive anti-automorphism ``$*$'', and the characteristic of $\mathcal{F}\neq2.$ Now we consider the following two systems:
\begin{align}
\label{system03}
\left\{\begin{array}{rll}
AXA^{*}+BYB^{*} & = & \Phi,\\
CXC^{*} & = & \Psi,\\
DXD^{*} & = & \Omega
\end{array}
  \right.
\end{align}
and
\begin{align}
\label{system04}
\left\{
\begin{array}{rll}
AXA^{*}+BYB^{*} & = & \Phi,\\
CXD & = & \Omega
\end{array}
  \right.
\end{align}
where $X$ and $Y$ are unknowns and the others are matrices over $\mathcal{F}$ with compatible dimensions.

\begin{theorem}\label{theorem05}
Given $A\in \mathcal{F}^{m\times p},
B\in \mathcal{F}^{m\times q},C\in \mathcal{F}^{s\times p},D\in \mathcal{F}^{t\times p},\Phi=\Phi^{*}\in \mathcal{F}^{m\times m},\Psi=\Psi^{*}\in \mathcal{F}^{s\times s},$ and $\Omega=\Omega^{*}\in \mathcal{F}^{t\times t}$. The system of matrix equations (\ref{system03}) has a pair of Hermitian solution $(X,Y)\in \mathcal{F}^{p\times p}\times \mathcal{F}^{q\times q}$ if and only if the ranks satisfy:
\begin{align*}
\mathbf{r}\begin{pmatrix}\Phi &A \\ B^{*}& 0\end{pmatrix}=\mathbf{r}(A)+\mathbf{r}(B),\mathbf{r}(A,~B,~\Phi)=\mathbf{r}(A,~B),
\end{align*}
\begin{align*}
\mathbf{r}(C,~\Psi)=\mathbf{r}(C),\mathbf{r}(D,~\Omega)=\mathbf{r}(D),~
\mathbf{r}\begin{pmatrix}\Psi &0&C\\0 &-\Omega &D\\C^{*}&D^{*}&0\end{pmatrix}=2\mathbf{r}\begin{pmatrix}C\\D\end{pmatrix},
\end{align*}
\begin{align*}
\mathbf{r}\begin{pmatrix}0&0&A^{*}&C^{*}\\A&B&-\Phi&0\\C&0&0&\Psi\end{pmatrix}=\mathbf{r}\begin{pmatrix}A&B\\C&0\end{pmatrix}+\mathbf{r}\begin{pmatrix}A\\C\end{pmatrix},
\end{align*}
\begin{align*}
\mathbf{r}\begin{pmatrix}0&0&A^{*}&D^{*}\\A&B&-\Phi&0\\D&0&0&\Omega\end{pmatrix}=\mathbf{r}\begin{pmatrix}A&B\\D&0\end{pmatrix}+\mathbf{r}\begin{pmatrix}A\\D\end{pmatrix},
\end{align*}
\begin{align*}
\mathbf{r}\begin{pmatrix}0&0&A^{*}&C^{*}&D^{*}\\0&0&B^{*}&0&0\\A&A&-\Phi&0&0\\C&0&0&\Psi&0\\0&D&0&0&\Omega\end{pmatrix}=\mathbf{r}\begin{pmatrix}A&B\\C&0\\D&0\end{pmatrix}
+\mathbf{r}\begin{pmatrix}A&A\\C&0\\0&D\end{pmatrix},
\end{align*}
\begin{align*}
\mathbf{r}\begin{pmatrix}0&0&0&A^{*}&C^{*}&D^{*}\\
A&A&B&-\Phi&0&0\\
C&0&0&0&\Psi&0\\
0&D&0&0&0&\Omega\end{pmatrix}=\mathbf{r}\begin{pmatrix}A\\C\\D\end{pmatrix}+\mathbf{r}\begin{pmatrix}A&A&B\\C&0&0\\0&D&0\end{pmatrix},
\end{align*}
\begin{align*}
\mathbf{r}\begin{pmatrix}0&0&0&0&0&0&B^{*}\\
0&0&0&D^{*}&0&A^{*}&0\\
0&0&0&0&C^{*}&A^{*}&A^{*}\\
0&C&0&0&-\Psi&0&0\\
0&0&D&\Omega&0&0&0\\
0&A&A&0&0&0&\Phi\\
B&0&A&0&0&-\Phi&0
\end{pmatrix}=2\mathbf{r}\begin{pmatrix}0&A&B\\A&A&0\\C&0&0\\0&D&0\end{pmatrix}.
\end{align*}
\end{theorem}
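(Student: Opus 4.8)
\textbf{Proof proposal for Theorem \ref{theorem05}.}

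The plan is to reduce the Hermitian-solvability question for system (\ref{system03}) to the general-solvability question for system (\ref{system02}) with the specialization $E=A^{*}$, $F=C^{*}$, $G=D^{*}$, $H=B^{*}$, and then invoke Theorem \ref{theorem04}. The standard device here is the following symmetry trick: if $(X,Y)$ is any solution (not necessarily Hermitian) of the system
\begin{align*}
AXA^{*}+BYB^{*}=\Phi,\quad CXC^{*}=\Psi,\quad DXD^{*}=\Omega,
\end{align*}
then, taking conjugate transposes and using $\Phi=\Phi^{*}$, $\Psi=\Psi^{*}$, $\Omega=\Omega^{*}$, the pair $(X^{*},Y^{*})$ is also a solution; since $\mathrm{char}\,\mathcal{F}\neq 2$, the averaged pair $\left(\tfrac{1}{2}(X+X^{*}),\tfrac{1}{2}(Y+Y^{*})\right)$ is then a Hermitian solution. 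Hence (\ref{system03}) has a Hermitian solution pair if and only if the (unconstrained) system
\begin{align*}
AXA^{*}+BYB^{*}=\Phi,\quad CXC^{*}=\Psi,\quad DXD^{*}=\Omega
\end{align*}
is consistent in $X,Y$. This last system is precisely (\ref{system02}) with $E=A^{*},F=C^{*},G=D^{*},H=B^{*}$.

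Next I would substitute these choices into the rank criteria (\ref{equ031})--(\ref{equ310}) of Theorem \ref{theorem04} and simplify. Here the key observations are: (i) $\mathbf{r}(N)=\mathbf{r}(N^{*})$ for every matrix $N$, and the $*$-operation turns a block-row partition into a block-column partition; (ii) consequently several of the paired conditions in (\ref{equ031})--(\ref{equ310}) become transposes of one another and collapse to a single condition — e.g. $\mathbf{r}\begin{pmatrix}\Phi&A\\H&0\end{pmatrix}=\mathbf{r}\begin{pmatrix}\Phi&B\\E&0\end{pmatrix}^{*}$ when $H=B^{*},E=A^{*}$, and $\mathbf{r}(A,B,\Phi)=\mathbf{r}\begin{pmatrix}E\\H\\\Phi\end{pmatrix}^{*}$, so the four conditions in (\ref{equ031})--(\ref{hhequ01}) reduce to the two conditions $\mathbf{r}\begin{pmatrix}\Phi&A\\B^{*}&0\end{pmatrix}=\mathbf{r}(A)+\mathbf{r}(B)$ and $\mathbf{r}(A,B,\Phi)=\mathbf{r}(A,B)$; (iii) similarly $\mathbf{r}\begin{pmatrix}A&B\\C&0\\D&0\end{pmatrix}=\mathbf{r}\begin{pmatrix}A&A&B\\C&0&0\\0&D&0\end{pmatrix}^{*}$ up to block permutations when the lower blocks are conjugate transposes of the right blocks, so the number of independent conditions drops substantially; and (iv) the big $7\times 7$-block matrix on the left of (\ref{equ310}) becomes, after the substitution, Hermitian up to a block permutation and a sign, and the two matrices on the right of (\ref{equ310}) are conjugate-transpose-equivalent, giving a single condition with coefficient $2$. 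Carrying out this bookkeeping termwise yields exactly the eight displayed rank equalities in the statement.

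The main obstacle I anticipate is (iv): verifying that, under the substitution, each of the several "asymmetric" block matrices appearing in (\ref{equ033})--(\ref{equ310}) is transformed by a permutation of block rows/columns and a diagonal sign change into the conjugate transpose of another such matrix (or of itself), so that the two-sided lists of conditions genuinely collapse to the one-sided list given. This requires tracking the block structure carefully — the specialization must be fed in simultaneously on both the coefficient side and the data side, and one must check that the block sizes match up so that a genuine permutation matrix effects the identification. I would handle this by writing each relevant matrix from (\ref{equ033})--(\ref{equ310}), applying $*$, and exhibiting the explicit block permutation; the signs are harmless since $\mathbf{r}(-N)=\mathbf{r}(N)$. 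Once every condition in (\ref{equ031})--(\ref{equ310}) has been matched against one of the eight stated equalities (or shown redundant), the equivalence is complete, and the theorem follows from Theorem \ref{theorem04} together with the averaging argument. (The analogous reductions for (\ref{system04}) are handled the same way but are not needed here.)
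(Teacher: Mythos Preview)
Your proposal is correct and follows essentially the same approach as the paper: reduce Hermitian solvability of (\ref{system03}) to ordinary solvability via the averaging trick $\bigl(\tfrac{X+X^{*}}{2},\tfrac{Y+Y^{*}}{2}\bigr)$, then specialize Theorem~\ref{theorem04} with $E=A^{*},F=C^{*},G=D^{*},H=B^{*}$. In fact you spell out more of the bookkeeping (the collapsing of paired conditions under $*$) than the paper does, which simply states that the conditions follow from Theorem~\ref{theorem04}.
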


\begin{proof}
We first prove that the system of matrix equations (\ref{system03}) has a pair of Hermitian solution $(X,Y)\in \mathcal{F}^{p\times p}\times \mathcal{F}^{q\times q}$
if and only if the system of matrix equations
\begin{align}
\label{equ336}
\left\{\begin{array}{rll}
A\widetilde{X}A^{*}+B\widetilde{Y}B^{*} & = &\Phi,\\
C\widetilde{X}C^{*} & = & \Psi,\\
D\widetilde{X}D^{*} & = & \Omega
\end{array}
  \right.
\end{align}
has a solution. If the system (\ref{system03}) has a pair of Hermitian solution,
say, $(X_{0},Y_{0})$, then the system (\ref{equ336}) clearly
has a solution $(\widetilde{X},\widetilde{Y})=(X_{0},Y_{0})$. Conversely, if the system (\ref{equ336}) has a solution
$(\widetilde{X},\widetilde{Y})$, then
\begin{align*}
(X,Y)=\Big(\frac{\widetilde{X}+\widetilde{X}^{*}}{2},\frac{\widetilde{Y}+\widetilde{Y}^{*}}{2}\Big)
\end{align*}
is a pair Hermitian solution of (\ref{system03}). We can derive the
solvability conditions to the system of matrix equations (\ref{system03}) by
Theorem \ref{theorem04}.
\end{proof}

\begin{theorem}
Let $A\in \mathcal{F}^{m\times p},
B\in \mathcal{F}^{m\times q},C\in \mathcal{F}^{s\times p},D\in \mathcal{F}^{p\times t},\Phi=\Phi^{*}\in \mathcal{F}^{m\times m},$ and $\Omega\in \mathcal{F}^{s\times t}$ be given. The system of matrix equations (\ref{system04}) has a pair of Hermitian solution $(X,Y)\in \mathcal{F}^{p\times p}\times \mathcal{F}^{q\times q}$ if and only if the ranks satisfy:
\begin{align*}
\mathbf{r}\begin{pmatrix}\Phi &A\\B^{*}&0\end{pmatrix}=\mathbf{r}(A)+\mathbf{r}(B),\mathbf{r}(A,~B,~\Phi)=\mathbf{r}(A,~B),
\end{align*}
\begin{align*}
\mathbf{r}(C,~\Omega)=\mathbf{r}(C),\mathbf{r}\begin{pmatrix}D\\ \Omega\end{pmatrix}=\mathbf{r}(D),
\mathbf{r}\begin{pmatrix}\Omega &0&C\\0&-\Omega^{*}&D^{*}\\D&C^{*}&0\end{pmatrix}=2\mathbf{r}\begin{pmatrix}C\\D^{*}\end{pmatrix},
\end{align*}
\begin{align*}
\mathbf{r}\begin{pmatrix}0&0&A^{*}&D\\A&B&-\Phi&0\\C&0&0&\Omega\end{pmatrix}=\mathbf{r}\begin{pmatrix}A&B\\C&0\end{pmatrix}+\mathbf{r}(A^{*},~D),
\end{align*}
\begin{align*}
\mathbf{r}\begin{pmatrix}0&A^{*}&D\\0&B^{*}&0\\A&-\Phi&0\\C&0&\Omega\end{pmatrix}=\mathbf{r}\begin{pmatrix}A^{*}&D\\B^{*}&0\end{pmatrix}+\mathbf{r}\begin{pmatrix}A\\C\end{pmatrix},
\end{align*}
\begin{align*}
\mathbf{r}\begin{pmatrix}0&0&A^{*}&D&C^{*}\\
0&0&B^{*}&0&0\\
A&A&-\Phi&0&0\\
C&0&0&\Omega&0\\
0&D&0&0&\Omega^{*}\end{pmatrix}=\mathbf{r}\begin{pmatrix}A&B\\C&0\\D^{*}&0\end{pmatrix}+\mathbf{r}\begin{pmatrix}A&A\\C&0\\0&D\end{pmatrix},
\end{align*}
\begin{align*}
\mathbf{r}\begin{pmatrix}0&0&0&A^{*}&D&C^{*}\\
A&A&B&-\Phi&0&0\\
C&0&0&0&\Omega&0\\
0&D^{*}&0&0&0&\Omega^{*}\end{pmatrix}=\mathbf{r}\begin{pmatrix}A\\C\\D^{*}\end{pmatrix}+\mathbf{r}\begin{pmatrix}A&A&B\\C&0&0\\0&D^{*}&0\end{pmatrix},
\end{align*}
\begin{align*}
\mathbf{r}\begin{pmatrix}0&0&0&0&0&0&B^{*}\\
0&0&0&C^{*}&0&A^{*}&0\\
0&0&0&0&D&A^{*}&A^{*}\\
0&C&0&0&-\Omega&0&0\\
0&0&D^{*}&\Omega^{*}&0&0&0\\
0&A&A&0&0&0&\Phi\\
B&0&A&0&0&-\Phi&0
\end{pmatrix}=2\mathbf{r}\begin{pmatrix}
0&A&B\\
A&A&0\\
C&0&0\\
0&D^{*}&0\end{pmatrix}.
\end{align*}

\end{theorem}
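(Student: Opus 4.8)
The plan is to follow the pattern of the proof of Theorem~\ref{theorem05}: reduce the Hermitian solvability of (\ref{system04}) to the ordinary consistency of a suitable instance of the system (\ref{system02}), and then apply Theorem~\ref{theorem04}. First I would show that (\ref{system04}) has a pair of Hermitian solutions $(X,Y)\in\mathcal{F}^{p\times p}\times\mathcal{F}^{q\times q}$ if and only if the augmented system
\begin{align*}
A\widetilde{X}A^{*}+B\widetilde{Y}B^{*}=\Phi,\qquad C\widetilde{X}D=\Omega,\qquad D^{*}\widetilde{X}C^{*}=\Omega^{*}
\end{align*}
has a solution $(\widetilde{X},\widetilde{Y})$, with no symmetry imposed on the unknowns. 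The forward implication is immediate: a Hermitian solution of (\ref{system04}) satisfies the first two equations, and applying the involution to $CXD=\Omega$ together with $X=X^{*}$ gives $D^{*}XC^{*}=\Omega^{*}$. For the converse, the key point is that the second and third equations of the augmented system are images of one another under the involution while $\Phi=\Phi^{*}$; hence if $(\widetilde{X},\widetilde{Y})$ solves the augmented system so does $(\widetilde{X}^{*},\widetilde{Y}^{*})$, and since $\mathrm{char}\,\mathcal{F}\neq 2$ the pair
\begin{align*}
(X,Y)=\Bigl(\tfrac{\widetilde{X}+\widetilde{X}^{*}}{2},\ \tfrac{\widetilde{Y}+\widetilde{Y}^{*}}{2}\Bigr)
\end{align*}
consists of Hermitian matrices solving the augmented system, and in particular solving (\ref{system04}).

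Next I would observe that the augmented system is an instance of (\ref{system02}): its first equation matches $AXE+BYH=\Phi$ with $E=A^{*}$ and $H=B^{*}$, its second matches $CXF=\Psi$ with $F=D$ and $\Psi=\Omega$, and its third matches $DXG=\Omega$ after renaming (the coefficient playing the role of $D$ in (\ref{system02}) being $D^{*}$ here, $G=C^{*}$, and the right-hand side $\Omega^{*}$), while $A,B,C,\Phi$ are left unchanged. Theorem~\ref{theorem04} then asserts that the augmented system is consistent if and only if the eighteen rank equalities collected in (\ref{equ031})--(\ref{equ310}), evaluated under this identification, all hold.

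It remains to simplify these substituted equalities, and this bookkeeping is the heart of the matter. The only facts needed are $\mathbf{r}(N)=\mathbf{r}(N^{*})$, the invariance of the rank of a partitioned matrix under conjugate transposition of the whole array and under permutations of its block rows and columns, and $(-\Phi)^{*}=-\Phi$. Using these, the pair of equalities in (\ref{equ031}) collapses to the single condition $\mathbf{r}\begin{pmatrix}\Phi&A\\B^{*}&0\end{pmatrix}=\mathbf{r}(A)+\mathbf{r}(B)$; the pair in (\ref{hhequ01}) collapses to $\mathbf{r}(A,B,\Phi)=\mathbf{r}(A,B)$; the four equalities in (\ref{equ032}) collapse to $\mathbf{r}(C,\Omega)=\mathbf{r}(C)$ and $\mathbf{r}\begin{pmatrix}D\\\Omega\end{pmatrix}=\mathbf{r}(D)$; and (\ref{equ033}) turns into the displayed equality whose right-hand side is $2\,\mathbf{r}\begin{pmatrix}C\\D^{*}\end{pmatrix}$. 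Among the remaining block equalities, the ones involving only $C$ pair off under the conjugate-transpose symmetry with those involving only $D$, and likewise the larger block conditions pair off, so that after one conjugate transposition and one permutation of block rows and columns each surviving equality becomes exactly one of those displayed in the statement, with (\ref{equ310}) yielding the final $7\times7$ condition. I expect this to be the only genuine obstacle: one must track the conjugate-transpose symmetry carefully enough to see that eight of the eighteen equalities of Theorem~\ref{theorem04} are redundant under the substitution and that the ten survivors simplify to precisely the stated forms. The individual steps are elementary block manipulations, but they are numerous and very sensitive to the ordering of blocks and to the placement of the involution.
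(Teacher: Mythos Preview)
Your proposal is correct and follows essentially the same route as the paper, whose proof consists only of the sentence ``The proof is similar to the proof of Theorem~\ref{theorem05}.'' You have correctly identified the one extra ingredient needed beyond a literal imitation of that proof: since (\ref{system04}) has only two equations, you must augment the relaxed (non-Hermitian) system with the conjugate-transposed equation $D^{*}\widetilde{X}C^{*}=\Omega^{*}$ before it becomes an instance of (\ref{system02}), and your symmetrization argument showing this augmented system is equivalent to Hermitian solvability of (\ref{system04}) is exactly right.
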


\begin{proof}The proof is similar to the proof of Theorem \ref{theorem05}.
\end{proof}

In the last part, we investigate another type of generalized Sylvester matrix equations (\ref{system05}).

\begin{theorem}\label{theorem6}
The system of matrix equations (\ref{system05}) is consistent if and only if the ranks satisfy:
\begin{align*}
\mathbf{r}\begin{pmatrix}\Phi&A&B\\H&0&0\end{pmatrix}=\mathbf{r}(A,~B)+\mathbf{r}(H),\mathbf{r}(C,~\Psi)=\mathbf{r}(C),\mathbf{r}(D,~\Omega)=\mathbf{r}(D),
\end{align*}
\begin{align*}
\mathbf{r}\begin{pmatrix}\Phi&B\\E&0\\H&0\end{pmatrix}=\mathbf{r}(B)+\mathbf{r}\begin{pmatrix}E\\H\end{pmatrix},
\mathbf{r}\begin{pmatrix}F\\ \Psi\end{pmatrix}=\mathbf{r}(F),\mathbf{r}\begin{pmatrix}G\\ \Omega\end{pmatrix}=\mathbf{r}(G),
\end{align*}
\begin{align*}
\mathbf{r}\begin{pmatrix}\Phi &0&A&B\\0&-\Psi&C&0\\E&F&0&0\\H&0&0&0\end{pmatrix}=\mathbf{r}\begin{pmatrix}A&B\\C&0\end{pmatrix}+\mathbf{r}\begin{pmatrix}E&F\\H&0\end{pmatrix},
\end{align*}
\begin{align*}
\mathbf{r}\begin{pmatrix}\Phi &0&A&B\\0&-\Omega&D&0\\E&G&0&0\\H&0&0&0\end{pmatrix}=\mathbf{r}\begin{pmatrix}A&B\\D&0\end{pmatrix}+\mathbf{r}\begin{pmatrix}E&G\\H&0\end{pmatrix},
\end{align*}
\begin{align*}
\mathbf{r}\begin{pmatrix}\Psi &0&C\\0&-\Omega&D\\F&G&0\end{pmatrix}=\mathbf{r}\begin{pmatrix}C\\D\end{pmatrix}+\mathbf{r}(F,~G),
\end{align*}
\begin{align*}
\mathbf{r}\begin{pmatrix}\Phi &0&0&A&A&B\\0&-\Psi&0&C&0&0\\0&0&-\Omega&0&D&0\\
E&F&G&0&0&0\\H&0&0&0&0&0\end{pmatrix}=\mathbf{r}\begin{pmatrix}A&A&B\\C&0&0\\0&D&0\end{pmatrix}+\mathbf{r}\begin{pmatrix}E&F&G\\H&0&0\end{pmatrix},
\end{align*}
\begin{align*}
\mathbf{r}\begin{pmatrix}\Phi &0&0&A&B\\0&-\Psi&0&C&0\\0&0&-\Omega&D&0\\E&F&0&0&0\\E&0&G&0&0\\H&0&0&0&0\end{pmatrix}=\mathbf{r}\begin{pmatrix}A&B\\C&0\\D&0\end{pmatrix}+
\mathbf{r}\begin{pmatrix}E&F&0\\E&0&G\\H&0&0\end{pmatrix}.
\end{align*}

\end{theorem}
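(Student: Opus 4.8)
The plan is to follow the same strategy that was used in the proof of Theorem~\ref{theorem04}, namely to reduce the system (\ref{system05}) to its ``canonical'' form by applying the two simultaneous decompositions of Theorems~\ref{theorem01} and~\ref{theorem02} and then reading off explicit scalar‐block consistency conditions. First I would note that since $M,P,Q,S,T$ and $M_1,P_1,Q_1,S_1,T_1$ are invertible, the system $AXE+BY+ZH=\Phi$, $CXF=\Psi$, $DXG=\Omega$ is consistent in $(X,Y,Z)$ if and only if the transformed system
\begin{align*}
\left\{\begin{array}{rll}
S_{a}\widehat X S_{e}+S_{b}\widehat Y+\widehat Z S_{h} & = & M\Phi M_{1},\\
S_{c}\widehat X S_{f} & = & S\Psi S_{1},\\
S_{d}\widehat X S_{g} & = & T\Omega T_{1}
\end{array}\right.
\end{align*}
is consistent, where $\widehat X=P^{-1}XP_1^{-1}$, $\widehat Y=Q^{-1}Y$, $\widehat Z=ZM_1^{-1}$, and the right-hand sides are partitioned conformally with the block structure of $S_a,S_c,S_d$ (rows) and $S_e,S_f,S_g,S_h$ (columns) as in (\ref{hhhequ312})--(\ref{hhhequ313}). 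The essential structural difference from (\ref{system02}) is that the term $BY$ has a full column block (no $S_h$ on the right), and $ZH$ supplies an extra free block on the left; this makes the bookkeeping slightly easier here, since $Y$ and $Z$ together can absorb whole block rows/columns of $\Phi$, so fewer entrywise constraints survive than in Theorem~\ref{theorem04}.

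Next I would carry out the block computation: writing out $S_{a}\widehat X S_{e}$, $S_{b}\widehat Y$, $\widehat Z S_{h}$ explicitly using the quasi-diagonal forms (\ref{equ022})--(\ref{equ024}) and (\ref{equ222}), one sees which positions of $M\Phi M_1$, $S\Psi S_1$, $T\Omega T_1$ can be matched by choosing the free entries of $\widehat X,\widehat Y,\widehat Z$ and which positions are forced to vanish or to coincide. This yields a finite list of scalar-block identities — the analogue of (\ref{equ324})--(\ref{equ340}), but shorter. The direction ($\Longrightarrow$) is then proved exactly as in Theorem~\ref{theorem04}: given a solution $(X_0,Y_0,Z_0)$, substitute $\Phi=AX_0E+BY_0+Z_0H$, $\Psi=CX_0F$, $\Omega=DX_0G$ into each of the displayed block matrices, and use the factorizations
\begin{align*}
\begin{pmatrix}\Phi&A&B\\H&0&0\end{pmatrix}
=\begin{pmatrix}I&Z_0\\0&I\end{pmatrix}\begin{pmatrix}AX_0E+BY_0&A&B\\H&0&0\end{pmatrix},
\end{align*}
followed by column operations clearing $AX_0E$ and $BY_0$, to reduce every block matrix to one whose rank is manifestly the claimed sum; similar triple products of elementary block matrices handle the larger displays, the last one being the $7\times 7$ block matrix handled just as in (\ref{equ310}). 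The converse ($\Longleftarrow$) direction then amounts to showing that the rank equalities in the statement are equivalent to (i.e.\ imply) the scalar-block identities produced by the canonical-form reduction; this is the same ``rank equality $\Longleftrightarrow$ vanishing of certain $\Phi_{ij},\Psi_{ij},\Omega_{ij}$'' translation used repeatedly in the proof of Theorem~\ref{theorem04}, applied block by block.

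The main obstacle I expect is purely combinatorial: correctly identifying, among the many blocks of the $18\times 18$ matrix $\widehat X$, the $8\times 8$ matrix $\widehat Y$ and the (tall) matrix $\widehat Z$, exactly which blocks are genuinely free and which couplings between $\Phi$, $\Psi$, $\Omega$ remain after $\widehat X$ is used to solve $S_c\widehat X S_f=S\Psi S_1$ and $S_d\widehat X S_g=T\Omega T_1$ simultaneously — i.e.\ tracking the shared blocks of $\widehat X$ that appear in both the $S_c$-equation and the $S_d$-equation (and in the $S_a$-equation). This is the step where an error would propagate, so I would organize it by first solving the two ``pure'' equations $S_c\widehat X S_f=S\Psi S_1$ and $S_d\widehat X S_g=T\Omega T_1$ for the relevant blocks of $\widehat X$, recording the compatibility conditions on $\Psi,\Omega$ and the remaining freedom, and only then imposing $S_a\widehat X S_e+S_b\widehat Y+\widehat Z S_h=M\Phi M_1$, using $\widehat Y$ and $\widehat Z$ to kill as much of $\Phi$ as possible. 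Once the scalar list is pinned down, matching it against the eight displayed rank equalities is routine, using the generalized-Schur/elementary-operation rank identities exactly as in Theorem~\ref{theorem04}.
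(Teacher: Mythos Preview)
Your plan is exactly the paper's own proof: apply Theorems~\ref{theorem01} and~\ref{theorem02} to pass to the canonical form, expand into block equations, and then translate between the resulting scalar-block constraints and the displayed rank identities just as in Theorem~\ref{theorem04}. The paper in fact says nothing more than ``using the similar approach in the proof of Theorem~\ref{theorem04}'', so your outline is more detailed than the original.

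One small correction to make before you carry it out: after left-multiplying $AXE+BY+ZH=\Phi$ by $M$ and right-multiplying by $M_{1}$, the middle terms become $S_{b}(Q^{-1}YM_{1})$ and $(MZQ_{1}^{-1})S_{h}$, so the correct substitutions are $\widehat Y=Q^{-1}YM_{1}$ and $\widehat Z=MZQ_{1}^{-1}$, not $\widehat Y=Q^{-1}Y$ and $\widehat Z=ZM_{1}^{-1}$. This does not affect the argument, since these are bijections in $Y$ and $Z$, but your $\widehat Y,\widehat Z$ as written would not satisfy the transformed equation.
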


\begin{proof}
It follows from Theorem \ref{theorem01} and Theorem \ref{theorem02} that the system of matrix equations (\ref{system05}) is equivalent to the system of matrix equations
\begin{align}
\left\{\begin{array}{rll}
S_{a}(P^{-1}XP_{1}^{-1})S_{e}+S_{b}(Q^{-1}YM_{1})+(MZQ_{1}^{-1})S_{h} & = &M \Phi M_{1},\\
S_{c}(P^{-1}XP_{1}^{-1})S_{f} & = & S \Psi S_{1},\\
S_{d}(P^{-1}XP_{1}^{-1})S_{g} & = &T \Omega T_{1}.
\end{array}
  \right.
\end{align}
Put
\begin{align}\label{equ312}
\widehat{X}=P^{-1}XP_{1}^{-1}:=\left(X_{ij}\right)_{18\times 18},~~\widehat{Y}:=Q^{-1}YM_{1}=\left(Y_{ij}\right)_{8\times 14},~~\widehat{Z}:=MZQ_{1}^{-1}=\left(Z_{ij}\right)_{14\times 8},
\end{align}
\begin{align}\label{equ313}
M \Phi M_{1}:=\left(\Phi_{ij}\right)_{14\times 14}, \ \ S \Psi S_{1}:=\left(\Psi_{ij}\right)_{10\times 10}, \ \ T\Omega T_{1}:=\left(\Omega_{ij}\right)_{10\times 10},
\end{align}where the block rows of $\left(\Phi_{ij}\right)_{14\times 14},\left(\Psi_{ij}\right)_{10\times 10},$ and $\left(\Omega_{ij}\right)_{10\times 10}$ are the same as the block rows of $S_{a},S_{c},$ and $S_{d}$, the block columns of $\left(\Phi_{ij}\right)_{14\times 14},\left(\Psi_{ij}\right)_{10\times 10},$ and $\left(\Omega_{ij}\right)_{10\times 10}$ are the same as the block columns of $S_{e},S_{f},$ and $S_{g}$. Using the similar approach in the proof of Theorem \ref{theorem04}, we can obtain the
solvability conditions to the system of matrix equations (\ref{system05}).
\end{proof}

As corollaries of Theorem \ref{theorem6}, we give the solvability conditions to the following systems over $\mathcal{F}$ with an involutive anti-automorphism ``$*$'' and the characteristic of $\mathcal{F}\neq2$:
\begin{align}\label{system06}
\left\{\begin{array}{c}
AXA^{*}+BY+(BY)^{*}=\Phi,\\CXC^{*}=\Psi,~DXD^{*}=\Omega,~X=X^{*}
\end{array}
  \right.
\end{align}
and
\begin{align}\label{system07}
\left\{\begin{array}{c}
AXA^{*}+BY+(BY)^{*}=\Phi,\\CXD=\Omega,~X=X^{*}
\end{array}
  \right.
\end{align}
where $X$ and $Y$ are unknowns and the others are matrices over $\mathcal{F}$ with compatible dimensions.

\begin{theorem}
Let $A\in \mathcal{F}^{m\times p},
B\in \mathcal{F}^{m\times q},C\in \mathcal{F}^{s\times p},D\in \mathcal{F}^{t\times p},\Phi=\Phi^{*}\in \mathcal{F}^{m\times m},\Psi=\Psi^{*}\in \mathcal{F}^{s\times s},$ and $\Omega=\Omega^{*}\in \mathcal{F}^{t\times t}$ be given. The system of matrix equations (\ref{system06}) is consistent if and only if the ranks satisfy:
\begin{align*}
\mathbf{r}\begin{pmatrix}\Phi &A&B\\B^{*}&0&0\end{pmatrix}=\mathbf{r}(A,~B)+\mathbf{r}(B),~~\mathbf{r}(C,~\Psi)=\mathbf{r}(C),~~\mathbf{r}(D,~\Omega)=\mathbf{r}(D),
\end{align*}
\begin{align*}
\mathbf{r}\begin{pmatrix}\Phi &0&A&B\\0&-\Psi&C&0\\A^{*}&C^{*}&0&0\\B^{*}&0&0&0\end{pmatrix}=2\mathbf{r}\begin{pmatrix}A&B\\C&0\end{pmatrix},
\mathbf{r}\begin{pmatrix}\Phi&0&A&B\\0&-\Omega&D&0\\A^{*}&D^{*}&0&0\\B^{*}&0&0&0\end{pmatrix}=2\mathbf{r}\begin{pmatrix}A&B\\D&0\end{pmatrix},
\end{align*}
\begin{align*}
\mathbf{r}\begin{pmatrix}\Psi&0&C\\0&-\Omega&D\\C^{*}&D^{*}&0\end{pmatrix}=2\mathbf{r}\begin{pmatrix}C\\D\end{pmatrix},
\end{align*}
\begin{align*}
\mathbf{r}\begin{pmatrix}\Phi&0&0&A&A&B\\0&-\Psi&0&C&0&0\\0&0&-\Omega&0&D&0\\A^{*}&C^{*}&D^{*}&0&0&0\\B^{*}&0&0&0&0&0\end{pmatrix}
=\mathbf{r}\begin{pmatrix}A&A&B\\C&0&0\\0&D&0\end{pmatrix}+\mathbf{r}\begin{pmatrix}A&B\\C&0\\D&0\end{pmatrix}.
\end{align*}

\end{theorem}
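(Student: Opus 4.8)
The plan is to reduce the existence of a Hermitian solution of (\ref{system06}) to the ordinary consistency of a suitable instance of (\ref{system05}), apply Theorem \ref{theorem6}, and then collapse the resulting list of rank equalities by means of the involution. \emph{Step 1 (symmetrization).} First I would show that (\ref{system06}) has a solution $(X,Y)$ with $X=X^{*}$ if and only if the auxiliary system $A\widetilde{X}A^{*}+B\widetilde{Y}+\widetilde{Z}B^{*}=\Phi$, $C\widetilde{X}C^{*}=\Psi$, $D\widetilde{X}D^{*}=\Omega$ has a solution $(\widetilde{X},\widetilde{Y},\widetilde{Z})$ with no symmetry imposed. One direction is immediate: take $(\widetilde{X},\widetilde{Y},\widetilde{Z})=(X,Y,Y^{*})$. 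For the converse, apply the anti-automorphism $*$ to the first auxiliary equation; since $\Phi=\Phi^{*}$, the triple $(\widetilde{X}^{*},\widetilde{Z}^{*},\widetilde{Y}^{*})$ also satisfies it, so averaging the two (this is where $\operatorname{char}\mathcal{F}\neq2$ is used) the pair
\[
X=\tfrac12\bigl(\widetilde{X}+\widetilde{X}^{*}\bigr),\qquad Y=\tfrac12\bigl(\widetilde{Y}+\widetilde{Z}^{*}\bigr)
\]
has $X=X^{*}$ and solves $AXA^{*}+BY+(BY)^{*}=\Phi$; moreover $C\widetilde{X}C^{*}=\Psi=\Psi^{*}$ and $D\widetilde{X}D^{*}=\Omega=\Omega^{*}$ force $CXC^{*}=\Psi$ and $DXD^{*}=\Omega$, so $X$ is indeed a Hermitian solution.

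\emph{Step 2 (apply Theorem \ref{theorem6}).} The auxiliary system is precisely the system (\ref{system05}) with $E=A^{*}$, $F=C^{*}$, $G=D^{*}$, $H=B^{*}$ (the unknowns $X,Y,Z$ there being renamed $\widetilde{X},\widetilde{Y},\widetilde{Z}$). Feeding these substitutions into the rank criterion of Theorem \ref{theorem6} produces a finite list of rank equalities in the matrices $A,B,C,D,A^{*},B^{*},C^{*},D^{*},\Phi,\Psi,\Omega$, which by Step 1 is equivalent to the Hermitian solvability of (\ref{system06}).

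\emph{Step 3 (collapse the conditions).} The remaining task is to simplify this list using $\mathbf{r}(N)=\mathbf{r}(N^{*})$ for every matrix $N$ together with $\Phi=\Phi^{*}$, $\Psi=\Psi^{*}$, $\Omega=\Omega^{*}$. Conjugate-transposing makes several of the conditions coincide: the equality built from $\begin{pmatrix}\Phi&B\\A^{*}&0\\B^{*}&0\end{pmatrix}$ becomes the one built from $\begin{pmatrix}\Phi&A&B\\B^{*}&0&0\end{pmatrix}$ since $\mathbf{r}\begin{pmatrix}A^{*}\\B^{*}\end{pmatrix}=\mathbf{r}(A,\,B)$; the equalities $\mathbf{r}\begin{pmatrix}C^{*}\\\Psi\end{pmatrix}=\mathbf{r}(C^{*})$ and $\mathbf{r}\begin{pmatrix}D^{*}\\\Omega\end{pmatrix}=\mathbf{r}(D^{*})$ turn into $\mathbf{r}(C,\,\Psi)=\mathbf{r}(C)$ and $\mathbf{r}(D,\,\Omega)=\mathbf{r}(D)$; and the two largest block conditions (the five- and six-block-row ones carrying the doubled $A$) are conjugate transposes of one another. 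The three equalities with a factor $2$ on the right-hand side arise from identities such as $\mathbf{r}\begin{pmatrix}A^{*}&C^{*}\\B^{*}&0\end{pmatrix}=\mathbf{r}\begin{pmatrix}A&B\\C&0\end{pmatrix}$ and $\mathbf{r}(C^{*},\,D^{*})=\mathbf{r}\begin{pmatrix}C\\D\end{pmatrix}$. After all these identifications exactly the seven rank equalities stated in the theorem remain.

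The step I expect to be the main obstacle is Step 3: keeping the conjugate-transpose bookkeeping straight, so that each raw condition coming out of Theorem \ref{theorem6} is correctly matched with its counterpart in the statement — in particular, verifying that the five-block-row and six-block-row conditions genuinely are conjugate transposes of each other rather than independent constraints, so that nothing is lost and nothing double counted.
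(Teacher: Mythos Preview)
Your proposal is correct and follows exactly the paper's own route: reduce (\ref{system06}) to the non-symmetric auxiliary system by the averaging argument $\bigl(X,Y\bigr)=\bigl(\tfrac12(\widetilde{X}+\widetilde{X}^{*}),\,\tfrac12(\widetilde{Y}+\widetilde{Z}^{*})\bigr)$, then invoke Theorem \ref{theorem6} with $E=A^{*}$, $F=C^{*}$, $G=D^{*}$, $H=B^{*}$. Your Step 3 merely spells out the conjugate-transpose bookkeeping that the paper leaves implicit, and your verification that the five-row and six-row conditions are $*$-images of each other (hence collapse to a single equality) is exactly right.
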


\begin{proof}

We first prove that the system of matrix equations (\ref{system06}) is consistent
if and only if the system of matrix equations
\begin{align} \label{equ345}
\left\{\begin{array}{rll}
A\widetilde{X}A^{*}+B\widetilde{Y}+\widetilde{Z}B^{*} & = &\Phi,\\
C\widetilde{X}C^{*} & = &\Psi,\\
D\widetilde{X}D^{*}& = & \Omega
\end{array}
  \right.
\end{align}
has a solution. If the system (\ref{system06}) has a solution,
say, $(X_{0},Y_{0})$, then the system (\ref{equ345}) clearly
has a solution $(\widetilde{X},\widetilde{Y},\widetilde{Z})=(X_{0},Y_{0},Y_{0}^{*})$. Conversely, if the system (\ref{equ345}) has a solution
$(\widetilde{X},\widetilde{Y},\widetilde{Z})$, then
\begin{align*}
(X,Y)=\Big(\frac{\widetilde{X}+\widetilde{X}^{*}}{2},\frac{\widetilde{Y}+\widetilde{Z}^{*}}{2}\Big)
\end{align*}
is a solution of (\ref{system06}). We can derive the
solvability conditions to the system of matrix equations (\ref{system06}) by
Theorem \ref{theorem6}.

\end{proof}

Similarly, we can give the solvability conditions to the system of matrix equations (\ref{system07}) as follows.

\begin{theorem}
Let $A\in \mathcal{F}^{m\times p},
B\in \mathcal{F}^{m\times q},C\in \mathcal{F}^{s\times p},D\in \mathcal{F}^{p\times t},\Phi=\Phi^{*}\in \mathcal{F}^{m\times m},$ and $\Omega\in \mathcal{F}^{s\times t}$ be given. The system of matrix equations (\ref{system07}) is consistent if and only if the ranks satisfy:
\begin{align*}
\mathbf{r}\begin{pmatrix}\Phi&A&B\\B^{*}&0&0\end{pmatrix}=\mathbf{r}(A,~B)+\mathbf{r}(B),\mathbf{r}(C,~\Omega)=\mathbf{r}(C),\mathbf{r}\begin{pmatrix}D\\ \Omega\end{pmatrix}=\mathbf{r}(D),
\end{align*}
\begin{align*}
\mathbf{r}\begin{pmatrix}\Phi &0&A&B\\0&-\Omega&C&0\\A^{*}&D&0&0\\B^{*}&0&0&0\end{pmatrix}=\mathbf{r}\begin{pmatrix}A&B\\C&0\end{pmatrix}+\mathbf{r}\begin{pmatrix}A&B\\D^{*}&0\end{pmatrix},
\end{align*}
\begin{align*}
\mathbf{r}\begin{pmatrix}\Omega&0&C\\0&-\Omega^{*}&D^{*}\\D&C^{*}&0\end{pmatrix}=2\mathbf{r}\begin{pmatrix}C\\D^{*}\end{pmatrix},
\end{align*}
\begin{align*}
\mathbf{r}\begin{pmatrix}\Phi&0&0&A&A&B\\0&-\Omega&0&C&0&0\\0&0&-\Omega^{*}&0&D^{*}&0\\A^{*}&D&C^{*}&0&0&0\\B^{*}&0&0&0&0&0\end{pmatrix}
=\mathbf{r}\begin{pmatrix}A&A&B\\C&0&0\\0&D^{*}&0\end{pmatrix}+\mathbf{r}\begin{pmatrix}A&B\\C&0\\D^{*}&0\end{pmatrix}.
\end{align*}

\end{theorem}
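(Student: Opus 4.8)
The plan is to reduce the system (\ref{system07}) to the already-solved system (\ref{system05}) by the standard symmetrization trick, exactly as was done for (\ref{system06}) via (\ref{equ345}). First I would claim that (\ref{system07}) is consistent if and only if the auxiliary system
\begin{align*}
\left\{\begin{array}{rll}
A\widetilde{X}A^{*}+B\widetilde{Y}+\widetilde{Z}B^{*} & = & \Phi,\\
C\widetilde{X}D & = & \Omega
\end{array}
  \right.
\end{align*}
has a (not necessarily structured) solution. One direction is immediate: a Hermitian pair $(X_0,Y_0)$ for (\ref{system07}) gives $(\widetilde X,\widetilde Y,\widetilde Z)=(X_0,Y_0,Y_0^{*})$. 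For the converse, if $(\widetilde X,\widetilde Y,\widetilde Z)$ solves the auxiliary system, apply $*$ to both equations and use $\Phi=\Phi^{*}$; averaging the original and conjugated equations shows that $\bigl(\tfrac{\widetilde X+\widetilde X^{*}}{2},\tfrac{\widetilde Y+\widetilde Z^{*}}{2}\bigr)$ is a solution of (\ref{system07}) — here the second equation needs the observation that $C\widetilde X D=\Omega$ together with its conjugate $D^{*}\widetilde X^{*}C^{*}=\Omega^{*}$ forces $C\bigl(\tfrac{\widetilde X+\widetilde X^{*}}{2}\bigr)D=\Omega$ once we also read off the conjugate appropriately; this is where the hypothesis $\mathrm{char}\,\mathcal F\neq 2$ is used, so that division by $2$ makes sense.

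Next I would recognize the auxiliary system as a specialization of (\ref{system05}): take the coefficient matrix in the first equation to be the given $A$ acting on $\widetilde X$ from the left by $A$ and from the right by $A^{*}$ (so the ``$E$'' of (\ref{system05}) is $A^{*}$), the $B$-term with the unknown $\widetilde Y$ unchanged, and the ``$ZH$'' term realized with $H=B^{*}$; the second equation $C\widetilde X D=\Omega$ plays the role of $CXF=\Psi$ with $F=D$, and the third equation of (\ref{system05}) is taken trivial (or absorbed — more cleanly, one runs Theorem \ref{theorem6} with the appropriate identifications and deletes the rows/columns associated with the absent block). Then I would invoke Theorem \ref{theorem6} directly: substitute $E\mapsto A^{*}$, $H\mapsto B^{*}$, $F\mapsto D$, $G\mapsto$ (vacuous), $\Psi\mapsto\Omega$ into the eight rank equalities of Theorem \ref{theorem6}, and simplify. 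The rank conditions of Theorem \ref{theorem6} that involve the now-absent $D,G,\Omega$ of that theorem collapse or merge; the surviving ones become, after transposing a few blocks using $\mathbf r(X)=\mathbf r(X^{*})$, precisely the four displayed rank equalities in the statement — e.g. $\mathbf r\begin{pmatrix}\Phi&A&B\\H&0&0\end{pmatrix}=\mathbf r(A,B)+\mathbf r(H)$ becomes $\mathbf r\begin{pmatrix}\Phi&A&B\\B^{*}&0&0\end{pmatrix}=\mathbf r(A,B)+\mathbf r(B)$, and the two-block Sylvester-type conditions produce the $\begin{pmatrix}A&B\\C&0\end{pmatrix}$-versus-$\begin{pmatrix}A&B\\D^{*}&0\end{pmatrix}$ and the $\begin{pmatrix}C\\D^{*}\end{pmatrix}$ conditions.

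The main obstacle, as usual in this circle of ideas, is purely bookkeeping: one must verify that the passage from the eight rank equalities of Theorem \ref{theorem6} to the four listed here is an exact equivalence, not merely an implication — i.e. that the conditions of Theorem \ref{theorem6} that seem to disappear are genuinely consequences of the remaining ones once the vacuous block is removed, and that no independent condition is lost. I would handle this by carrying out the specialization at the level of the equivalent ``star-structured'' system obtained after the decompositions of Theorems \ref{theorem01} and \ref{theorem02} (as in the proof of Theorem \ref{theorem6}), where the solvability is reduced to a finite list of scalar/block equalities among the entries of $\widehat\Phi,\widehat\Psi,\widehat\Omega$; one checks that imposing $H=B^{*}$, $E=A^{*}$, $F=D$ and deleting the $G$-block makes several of those entry-conditions coincide, and then repackages the reduced list as rank equalities. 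Since the referenced theorem already did the heavy lifting, the remaining work is a careful but routine transcription, and I would present it as such, writing only ``the proof is analogous to that of Theorem \ref{theorem6}'' together with the symmetrization reduction above — which is exactly the level of detail the paper adopts for the parallel Theorem for (\ref{system04}).
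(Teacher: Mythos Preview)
Your overall strategy --- reduce to an unstructured auxiliary system via symmetrization and then invoke Theorem \ref{theorem6} --- is exactly what the paper intends. But there is a genuine gap in your reduction step, and it propagates into your specialization of Theorem \ref{theorem6}.

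The problem is your claim that ``$C\widetilde X D=\Omega$ together with its conjugate $D^{*}\widetilde X^{*}C^{*}=\Omega^{*}$ forces $C\bigl(\tfrac{\widetilde X+\widetilde X^{*}}{2}\bigr)D=\Omega$.'' It does not: conjugating $C\widetilde X D=\Omega$ yields information about $D^{*}\widetilde X^{*}C^{*}$, not about $C\widetilde X^{*}D$. With only the two-equation auxiliary system you wrote down, the averaged matrix $\tfrac{\widetilde X+\widetilde X^{*}}{2}$ need not satisfy $CXD=\Omega$, so the ``only if'' direction of your equivalence fails. Consequently your plan to treat the third equation of (\ref{system05}) as vacuous is wrong.

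The fix is to enlarge the auxiliary system to \emph{three} equations,
\[
A\widetilde X A^{*}+B\widetilde Y+\widetilde Z B^{*}=\Phi,\qquad C\widetilde X D=\Omega,\qquad D^{*}\widetilde X C^{*}=\Omega^{*},
\]
and then the averaging works: conjugating the third equation gives $C\widetilde X^{*}D=\Omega$, so $C\bigl(\tfrac{\widetilde X+\widetilde X^{*}}{2}\bigr)D=\Omega$. This system is precisely (\ref{system05}) with the identifications $E\mapsto A^{*}$, $H\mapsto B^{*}$, $(C,F,\Psi)\mapsto(C,D,\Omega)$, and $(D,G,\Omega)\mapsto(D^{*},C^{*},\Omega^{*})$. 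Under these substitutions the eleven rank equalities of Theorem \ref{theorem6} collapse in $*$-conjugate pairs (e.g.\ the $(C,F,\Psi)$-condition and the $(D,G,\Omega)$-condition become conjugate transposes of each other), leaving exactly the six conditions of the statement; the appearance of $\begin{pmatrix}C\\D^{*}\end{pmatrix}$ and of both $C$ and $D^{*}$ as block rows in the displayed matrices is the visible trace of this. With that correction your sketch matches the paper's (omitted) proof.
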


\section{\textbf{Conclusion}}

We have presented all the dimensions of identity matrices in the equivalence canonical form of four matrices over an arbitrary division ring $\mathcal{F}$
with compatible sizes: $A\in \mathcal{F}^{m\times p},
B\in \mathcal{F}^{m\times q},C\in \mathcal{F}^{s\times p},D\in \mathcal{F}^{t\times p}$, which is a challenging problem proposed in \cite{zhangxia}.  Using the complete equivalence canonical form, we have derived some necessary and sufficient conditions for the existence of the general solutions to the systems (\ref{system02}), (\ref{system05}), and (\ref{system03})-(\ref{system07}) in terms of  ranks of the given matrices.

Note that the real number field, the complex number field, and the real quaternion algebra are special cases of an arbitrary division ring $\mathcal{F}$. Hence, all the results in this paper are also valid over the real number field, the complex number field and the real quaternion algebra.
\\

\noindent {\bf Acknowledgement:}  The authors would like to express their sincere thanks to   the referee for his/her
constructive and valuable comments and suggestions which greatly
improve this paper.

\end{document}